\documentclass[a4paper,pdftex,reqno,10pt]{amsart}

\usepackage{amsmath}
\usepackage{amssymb}
\usepackage{url}
\usepackage{algorithm}
\usepackage{algpseudocode}
\usepackage{caption}
\usepackage{varwidth}
\usepackage{makecell}
\usepackage[colorlinks]{hyperref}

\captionsetup{justification=raggedright,singlelinecheck=false,labelfont=it}

\newtheorem{theorem}{Theorem}
\newtheorem{lemma}{Lemma}
\newtheorem{corollary}{Corollary}
\newtheorem{example}[theorem]{Example}
\newtheorem{definition}[theorem]{Definition}

\newcommand{\F}[2]{\mathbb{F}_{#2}^{#1}}
\newcommand{\EF}[2]{\operatorname{EF}_{#2}(#1)}
\newcommand{\PG}[2]{\mathcal{P}_{#2}(#1)}
\newcommand{\G}[3]{\mathcal{G}_{#3}(#1,#2)}
\newcommand{\gaussm}[3]{\genfrac{[}{]}{0pt}{}{#1}{#2}_{#3}}
\newcommand{\rk}{\operatorname{rk}}

\usepackage{color}
\ifdefined\myred
\newcommand{\highlight}[1]{\textcolor{red}{#1}}
\else
\newcommand{\highlight}[1]{#1}
\fi

\ifdefined\mygreen
\newcommand{\highlightgreen}[1]{\textcolor{green}{#1}}
\else
\newcommand{\highlightgreen}[1]{#1}
\fi

\begin{document}
\title{Coset Construction for Subspace Codes}

\author{Daniel~Heinlein and~Sascha~Kurz}
\email{\href{mailto:daniel.heinlein@uni-bayreuth.de}{daniel.heinlein@uni-bayreuth.de}}
\email{\href{mailto:sascha.kurz@uni-bayreuth.de}{sascha.kurz@uni-bayreuth.de}}

\thanks{The work of the authors was supported by the grant KU~2430/3-1 and WA~1666/9-1 ``Integer Linear Programming Models for Subspace Codes and Finite Geometry'' from the German Research Foundation and the COST~Action~IC1104 ``Random Network
Coding and Designs over $\operatorname{GF}(q)$''.}

\subjclass{Primary 05B25, 51E20; Secondary 51E22, 51E23}
\keywords{Constant dimension codes, subspace codes, subspace distance, Echelon-Ferrers construction}

\begin{abstract}
One of the main problems of \highlightgreen{the research} area of network coding is to compute good lower
and upper bounds of the achievable \highlight{cardinality of} so-called subspace codes in $\mathbf{\PG{n}{q}}$\highlightgreen{, i.e., the set of subspaces of $\mathbb{F}_q^n$,}
for a given
minimal distance. Here we generalize a construction of Etzion and Silberstein to a wide range
of parameters. This construction, named \emph{coset construction}, improves \highlightgreen{or attains} several of the
previously \highlight{best-known} subspace code \highlightgreen{sizes} and attains the MRD bound for an infinite family of parameters.
\end{abstract}

\maketitle

\section{Introduction}
Let $\mathbb{F}_q$ be the finite field of order $q$ and $V$ be a vector space
of dimension $n$ over $\mathbb{F}_q$. Since $V$ is isomorphic to $\F{n}{q}$, we will assume
$V=\F{n}{q}$ in the following. By $\G{n}{k}{q}$ we denote the set of all $k$-dimensional
subspaces of $\F{n}{q}$, where $0\le k\le n$. The \emph{projective space} of order $n$ over
$\mathbb{F}_q$ is given by $\PG{n}{q}=\cup_{0\le k\le n} \G{n}{k}{q}$. It is well known
that
\[
  d_S(U,W):=\dim U+\dim W-2\dim(U\cap W)
\]
is a metric on $\PG{n}{q}$ \cite{ahlswede2001perfect}. Thus, one can define codes on
$\PG{n}{q}$ and $\G{n}{k}{q}$, which are called \emph{subspace codes} and \emph{constant
dimension codes}, respectively. \highlight{The distance function $d_S$ is known as
\emph{subspace distance} and one of the two distance functions that can be motivated by an
information-theoretic analysis of the so-called \highlightgreen{Silva-Kschischang-K{\"o}tter} model \cite{silva2008rank}.
The second distance function is the so-called \emph{injection distance}
$d_I(U,V):=\max\left\{\dim U,\dim V\right\}-\dim(U\cap V)$. For two subspaces of the same
dimension we have $d_S(U,V)=2d_I(U,V)$, i.e., the two metrics
are equivalent on $\G{n}{k}{q}$, and
$d_I(U,V)\le d_S(U,V)\le 2 d_I(U,V)$ in general.} We say that $\mathcal{C}\subseteq \PG{n}{q}$ is an $(n,M,d)_q$
code \highlightgreen{in the projective space} if $\left|\mathcal{C}\right|=M$ and $d(U,V)\ge d$ for all $U,V\in\mathcal{C}$.
If $\mathcal{C}\subseteq \G{n}{k}{q}$ for some $k$, we speak of an $(n,M,d;k)_q$ code. The \emph{minimum distance}
of a code $\mathcal{C}\subseteq \PG{n}{q}$ is denoted by $D_S(\mathcal{C}):=\min_{U\neq V\in\mathcal{C}} d_S(U,V)$.
One major problem is the determination of the maximum size $A_q(n,d)$ of an $(n,M,d)_{\highlight{q}}$ code in $\PG{n}{q}$ and the maximum
size $A_q(n,d;k)$ of an $(n,M,d;k)_{\highlight{q}}$ code in $\G{n}{k}{q}$. Bounds for $A_q(n,d)$ and $A_q(n,d;k)$ \highlight{have been heavily studied},
see e.g.\ the survey \cite{etzion2015galois} or the new \highlight{online} database
at \url{http://subspacecodes.uni-bayreuth.de} \cite{TableSubspacecodes}.

With respect to lower bounds on $A_q(n,d;k)$, an asymptotically optimal
construction is given by lifted maximum-rank-distance
codes \cite{gabidulin1985theory,silva2008rank}.
\highlightgreen{To be more precise, the
rate of \highlight{the} transmission $\frac{\log_q \left|\mathcal{C}\right|}{n\cdot \max_{U\in\mathcal{C}}\dim(U)}$
is asymptotically optimal \highlight{up to a constant factor} \cite{koetter2008coding}. A rough estimation between $\left|\mathcal{C}\right|$
and the Singleton bound yields an approximation factor of at most~$4$.}
The concept of maximum-rank-distance codes was generalized
from \highlight{arbitrary} rectangular matrices to matrices with a (structured) set of prescribed zeros in
\cite{etzion2009error} and used to combine several maximum-rank-distance codes to \highlight{generate} a constant dimension code
-- the so-called multilevel or Echelon-Ferrers construction. Many of the \highlight{best-known} lower bounds on $A_q(n,d;k)$
arise from this construction. However, it is rather general and involves several search spaces or optimization
problems in order to be evaluated optimally. For special subclasses explicit variants of the construction and indeed
explicit formulas for the sizes of the corresponding codes have been obtained, see \cite{skachek2010recursive}.
We remark that additional refinements of the Echelon-Ferrers construction have been proposed
recently, see \highlight{\cite{MR3480069,etzion2013codes,silberstein2014subspace}}.

\highlight{An improvement beyond the Echelon-Ferrers construction was
Construction~III in \cite{etzion2013codes} giving $A_2(8,4;4)\ge 4797$.
The authors conjecture
that the underlying idea can be generalized to further parameters assuming the existence
of a corresponding parallelism. In Theorem~\ref{thm_coset_parallelism} we will show that this is indeed the case. Moreover,
there is a more general underlying construction for $(n,M,d;k)_q$ and $(n,M,d)_q$ codes that is capable of improving some
of the so far best-known lower bounds on $A_q(n,d;k)$, which is the core of this paper. To this end, we will
give several infinite, parametric families of constructions as well as sporadic examples.}

The remaining part of the paper is organized as follows. In Section~\ref{sec_preliminaries} we collect
some facts about representations of subspaces, MRD codes, parallelisms, and the Echelon-Ferrers construction. The
main idea of the coset construction is described in Section~\ref{sec_coset_construction}. Since this construction
has several degrees of freedom, we present some first insights on the choice of {\lq\lq}good{\rq\rq} parameters
in Section~\ref{sec_optimal_parameters}. After listing some examples improving \highlightgreen{or attaining} several lower bounds on $A_q(n,d;k)$
in Section~\ref{sec_examples}\highlight{, we conclude with open questions} in Section~\ref{sec_conclusion}.

\section{Preliminaries}
\label{sec_preliminaries}
In this section we summarize some notation and \highlight{well-known} insights that will be used in the
 later parts of the paper.

\subsection{Gaussian elimination and representations of subspaces}
\label{subsec_gaussian_elimination}
Let $A\in\mathbb{F}_q^{k\times n}$ be a matrix of (full) rank $k$. The row-space of $A$ forms \highlight{a}
$k$-dimensional subspace of $\F{n}{q}$. The matrix $A$ is called \emph{generator matrix} of a given
element of $\G{n}{k}{q}$. Since the  application of the Gaussian elimination algorithm \highlight{on} a generator matrix
$A$ does not change the row-space, we can restrict ourselves \highlight{on} generator matrices which are
in \emph{reduced row echelon form} (\highlightgreen{RRE} \highlight{form}), i.e., the matrix has the shape resulting from Gaussian elimination.
The representation is unique and does not depend on the elimination algorithm.
This well-known connection is indeed a bijection, which we denote by
\begin{align}
\highlightgreen{
\tau:\G{n}{k}{q}\rightarrow \left\{A'\in \mathbb{F}_q^{k\times n}:\operatorname{rk}(A')=k, A'\text{ in \highlightgreen{RRE} form}\right\}.
}
\label{eqn:tau}
\end{align}
This observation
is capable \highlight{of easily explaining} many properties of $\G{n}{k}{q}$ so that we commonly identify the elements of $\G{n}{k}{q}$
with their corresponding generator matrices in reduced row echelon  form.

Given a matrix $A\in \mathbb{F}_q^{k\times n}$ of full rank we denote by $p(A)\in\mathbb{F}_2^n$ the
binary vector whose $1$-entries coincide with the pivot columns of $A$. For each $v\in\mathbb{F}_2^n$
let $\EF{v}{q}$ denote the set of all $k\times n$ matrices over $\mathbb{F}_q$
that are in reduced row echelon form with pivot columns described by $v$, where $k$ is the weight of $v$.

\begin{example}
  For $v=(1,0,1,1,0)$
  we have
  \[
    \EF{v}{q}=\left\{\begin{pmatrix}
    1 & \star & 0 & 0 & \star \\
    0 & 0     & 1 & 0 & \star \\
    0 & 0     & 0 & 1 & \star
    \end{pmatrix}\right\},
  \]
  where the $\star$s represent arbitrary elements of $\mathbb{F}_q$, i.e.,
  $\left|\EF{v}{q}\right|=q^4$.
\end{example}

In general we have
\[
  \left|\operatorname{EF}_q\Big((v_1,\dots,v_n)\Big)\right|= q^{\sum\limits_{i=1}^{n}\left(1-v_i\right)\cdot \sum\limits_{j=1}^{i} v_j}\highlightgreen{.}
\]
and the structure of the corresponding matrices can be read off from the corresponding
\emph{(Echelon)-Ferrers diagram}
\[
  \begin{array}{ll}
    \bullet & \bullet \\
            & \bullet \\
            & \bullet
  \end{array},
\]
where the pivot columns and zeros are omitted and the stars are replaced by solid black circles.
\highlightgreen{A Ferrers diagram represents partitions as patterns of dots, with the
\highlight{$i$}th row having the same number of dots as the \highlight{$i$}th term \highlight{$s_i$} in the partition $n'=s_1+\dots+s_l$, where
$s_1\ge\dots\ge s_l$ and \highlight{$s_j\in\mathbb{N}_{>0}$}, \highlight{cf.}~\cite{andrews1998theory}. Usually
a Ferrers diagram is depicted in such a way that it is the vertically mirrored version of the above
constructed (Echelon)-Ferrers diagram. \highlight{In the special case of Echelon-Ferrers diagrams, we have $n'=\sum_{i=1}^n(1-v_i)\cdot \sum_{j=1}^i v_j$.}}

By summing over all binary vectors of weight $k$ in $\F{n}{2}$ one can compute
\[
  \left|\G{n}{k}{q}\right|=\gaussm{n}{k}{q}:=
  \prod_{i=1}^{k} \frac{q^{n-k+i}-1}{q^i-1},
\]
where $\gaussm{n}{k}{q}$ is called \emph{Gaussian binomial coefficient}.

Later on we will use the inverse operation of deleting the pivot columns of a
matrix in \highlightgreen{RRE} form:
\begin{definition}
  Let $B\in\mathbb{F}_q^{k\times n}$ be a full-rank matrix in \highlightgreen{RRE} form and $F\in\mathbb{F}_q^{k'\times (n-k)}$ be arbitrary,
  where $k,k',n\in \mathbb{N}$ and $k\le n$. Let further $f^i$ denote the $i$th column of $F$ \highlightgreen{for $i \in \{1,\ldots,n\}$}. Then,
  $G=\varphi_{B}(F)$ denotes the $k'\times n$ matrix over $\mathbb{F}_q$ whose columns are given by
  $g^i=\mathbf{0}\in\mathbb{F}_q^{k'}$ if $v_i=1$ and \highlight{$g^i=f^{i-s_i}$} otherwise, where $(v_1,\dots,v_n)=p(B)$ and $s_i=\sum_{j=1}^i v_j$,
  for all $1\le i\le n$.
\end{definition}

\begin{example}
  For
  \[
    B=\begin{pmatrix}
      0 & 1 & 1 & 0 & 1 & 0  \\
      0 & 0 & 0 & 1 & 0 & 0 \\
      0 & 0 & 0 & 0 & 0 & 1 \\
    \end{pmatrix}
    \text{ and }F=\begin{pmatrix}
      1 & 0 & 0 \\
      0 & 1 & 0 \\
      1 & 0 & 0 \\
      0 & 1 & 0 \\
    \end{pmatrix}
  \]
  we have $p(B)=(0,1,0,1,0,1)$, \highlightgreen{$s_1=0, s_2=s_3=1, s_4=s_5=2, s_6=3$,} and
  \[
    \varphi_B(F)=\begin{pmatrix}
      1 & 0 & 0 & 0 & 0 & 0 \\
      0 & 0 & 1 & 0 & 0 & 0 \\
      1 & 0 & 0 & 0 & 0 & 0 \\
      0 & 0 & 1 & 0 & 0 & 0 \\
    \end{pmatrix}.
  \]
\end{example}

\subsection{MRD codes and the Echelon-Ferrers construction}
For matrices $A,B\in\mathbb{F}_q^{m\times n}$ the \emph{rank distance}
is defined via $d_R(A,B):=\operatorname{rk}(A-B)$.
It is indeed a metric, as observed in \cite{gabidulin1985theory}.
The maximum possible cardinality of a rank-metric
code with given minimum rank distance is exactly determined in
all cases.
\begin{theorem}(see \cite{gabidulin1985theory})
  \label{thm_MRD_size}
  Let $m,n\ge d$ be positive integers, $q$ a prime power, and $\mathcal{C}\subseteq \mathbb{F}_q^{m\times n}$ be a rank-metric
  code with minimum rank distance $d$. Then, $|\mathcal{C}|\le q^{\max(n,m)\cdot (\min(n,m)-d+1)}$.
  Codes attaining this upper bound are called maximum-rank distance (MRD) codes. They exist for all (suitable) choices of parameters.
\end{theorem}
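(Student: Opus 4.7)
The plan is to prove the Singleton-type upper bound by a puncturing argument, and for the existence part to invoke Gabidulin's explicit construction via $q$-linearized polynomials.

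For the upper bound, I would first reduce to the case $m\le n$. Since transposition preserves rank, $\mathcal{C}$ and $\mathcal{C}^{T}:=\{A^{T}:A\in\mathcal{C}\}$ have the same cardinality and the same minimum rank distance, so without loss of generality $\min(n,m)=m$ and $\max(n,m)=n$. Now consider the projection
\[
\pi:\mathbb{F}_q^{m\times n}\to\mathbb{F}_q^{(m-d+1)\times n}
\]
that retains only the first $m-d+1$ rows. I claim $\pi$ is injective on $\mathcal{C}$: if $A,B\in\mathcal{C}$ satisfy $\pi(A)=\pi(B)$, then $A-B$ vanishes on its first $m-d+1$ rows, so $\operatorname{rk}(A-B)\le d-1$, and the minimum distance assumption forces $A=B$. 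Counting images gives $|\mathcal{C}|\le q^{n(m-d+1)}=q^{\max(n,m)\cdot(\min(n,m)-d+1)}$.

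For the matching lower bound I would describe Gabidulin's construction. Assuming WLOG $m\ge n$ (again by transposition), set $k:=\min(n,m)-d+1=n-d+1$, fix $\alpha_1,\dots,\alpha_n\in\mathbb{F}_{q^m}$ that are linearly independent over $\mathbb{F}_q$, and fix an $\mathbb{F}_q$-basis of $\mathbb{F}_{q^m}$ in order to identify $\mathbb{F}_{q^m}^{n}$ with $\mathbb{F}_q^{m\times n}$. Define $\mathcal{C}$ to be the image of the evaluation map $f\mapsto(f(\alpha_1),\dots,f(\alpha_n))$ as $f$ ranges over $q$-polynomials $f(x)=\sum_{i=0}^{k-1}a_i x^{q^i}$ with $a_i\in\mathbb{F}_{q^m}$. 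Then $|\mathcal{C}|=(q^m)^k=q^{\max(n,m)\cdot(\min(n,m)-d+1)}$, matching the bound. The rank-distance property uses the classical fact that a nonzero $q$-polynomial of $q$-degree less than $k$ has an $\mathbb{F}_q$-kernel of dimension at most $k-1$; hence its image on $\operatorname{span}_{\mathbb{F}_q}(\alpha_1,\dots,\alpha_n)$ has $\mathbb{F}_q$-rank at least $n-k+1=d$.

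The upper-bound direction is an easy pigeonhole/injection argument, so the main obstacle is really the existence part: one must verify the minimum distance of the Gabidulin construction. This in turn reduces to the root bound for linearized polynomials, which is classical but the only nontrivial input. Since the theorem is cited from \cite{gabidulin1985theory}, in the final writeup I would simply sketch the puncturing bound and refer to Gabidulin's construction for the lower bound.
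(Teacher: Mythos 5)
The paper does not prove this theorem at all: it is imported verbatim from \cite{gabidulin1985theory} as a known result, so there is no in-paper argument to compare against. Your blind proof is the standard one and is correct in both halves. The puncturing argument is sound: after reducing to $m\le n$ by transposition (which preserves rank and hence the minimum distance), two codewords agreeing on their first $m-d+1$ rows differ by a matrix supported on at most $d-1$ rows, hence of rank at most $d-1<d$, so the projection is injective and $|\mathcal{C}|\le q^{n(m-d+1)}$. The existence half via Gabidulin's evaluation of $q$-linearized polynomials of $q$-degree less than $k=\min(n,m)-d+1$ at $n$ linearly independent points of $\mathbb{F}_{q^m}$ is likewise correct; the key input, that a nonzero linearized polynomial of $q$-degree at most $k-1$ has root space of $\mathbb{F}_q$-dimension at most $k-1$, follows from the ordinary degree bound $q^{k-1}$ on the number of roots together with the fact that the root set is an $\mathbb{F}_q$-subspace, and linearity of the code lets you pass from minimum distance to minimum rank of a nonzero codeword. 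The only cosmetic remark is that the hypothesis $m,n\ge d$ guarantees $m-d+1\ge 1$ so the punctured alphabet is nonempty, and that the theorem's parenthetical ``(suitable)'' is exactly this condition; your sketch already respects it.
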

If $m<d$ or $n<d$, then only $|\mathcal{C}|=1$ is possible, which \highlight{can be combined to give a} single upper bound
$|\mathcal{C}|\le \left\lceil q^{\max(n,m)\cdot (\min(n,m)-d+1)}\right\rceil$.
Using an $m\times m$ identity matrix as a prefix\highlight{, one obtains the corresponding subspace codes known as} lifted MRD codes.

\begin{theorem}(see \cite{silva2008rank})
  \label{thm_lifted_MRD}
  For positive integers $k,d,n$ with $k\le n$, $d\le 2\min(k,n-k)$, and $d\equiv 0\pmod{2}$, the size of a lifted MRD code in \highlight{$\G{n}{k}{q}$} with
  subspace distance $d$ is given by
  \[
    M(q,k,n,d):=
    q^{\max\highlight{\{}k,n-k\highlight{\}}\cdot(\min\highlight{\{}k,n-k\highlight{\}}-d/2+1)}.
  \] If $d>2\min(k,n-k)$, then we have $M(q,k,n,d)=1$.
\end{theorem}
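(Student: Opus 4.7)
The plan is to exhibit an explicit code of the stated size and verify its minimum subspace distance, which is the classical lifting construction. First I would define the lifting map $\lambda\colon \mathbb{F}_q^{k\times(n-k)}\to\G{n}{k}{q}$ by sending a matrix $A$ to the row space of $[I_k\,|\,A]$, where $I_k$ denotes the $k\times k$ identity. Each image is a $k$-dimensional subspace because $[I_k\,|\,A]$ has full row rank; moreover the matrix is already in RRE form with pivot vector $(1,\dots,1,0,\dots,0)$, so by the bijection \eqref{eqn:tau} distinct matrices $A$ give rise to distinct subspaces and $\lambda$ is injective.

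The heart of the argument is the distance calibration $d_S(\lambda(A),\lambda(B))=2\rk(A-B)$. To see this I would stack the two generator matrices and row-reduce: subtracting the top block from the bottom turns $[I_k\,|\,B]$ into $[0\,|\,B-A]$. The resulting two row spaces intersect trivially (since any vector of the form $(v,vA)$ with $v\neq 0$ has nonzero left block), so their sum has dimension $k+\rk(A-B)$. Inclusion-exclusion then yields $\dim(\lambda(A)\cap\lambda(B))=k-\rk(A-B)$, and the claimed identity for $d_S$ is immediate.

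With the calibration in hand, I would apply Theorem~\ref{thm_MRD_size} to obtain an MRD code $\mathcal{C}^\star\subseteq \mathbb{F}_q^{k\times(n-k)}$ of minimum rank distance $d/2$; this is possible precisely when $d/2\le\min(k,n-k)$, which is exactly the hypothesis $d\le 2\min(k,n-k)$. Its cardinality is $q^{\max(k,n-k)\cdot(\min(k,n-k)-d/2+1)}$, matching $M(q,k,n,d)$ verbatim, and $\lambda(\mathcal{C}^\star)$ inherits this cardinality with minimum subspace distance $d$.

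Finally, for the degenerate range $d>2\min(k,n-k)$, any two distinct matrices in $\mathbb{F}_q^{k\times(n-k)}$ differ by a nonzero matrix of rank at most $\min(k,n-k)<d/2$, so the rank-distance condition forces at most one codeword, yielding $M(q,k,n,d)=1$. Since every step reduces to routine linear algebra plus Theorem~\ref{thm_MRD_size}, I do not foresee any serious obstacle; the only minor subtlety is ensuring that the stack-and-reduce argument and the MRD size formula are applied uniformly for both orientations $k\le n-k$ and $k>n-k$, which works transparently because both cases appear symmetrically through $\max$ and $\min$.
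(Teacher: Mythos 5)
Your proof is correct and is the standard argument: the paper itself states this result without proof (citing \cite{silva2008rank}), and your distance calibration $d_S(\lambda(A),\lambda(B))=2\,d_R(A,B)$ is exactly the content of Lemma~\ref{lemma_same_ef} specialized to the pivot vector $(1,\dots,1,0,\dots,0)$, after which the size claim follows from Theorem~\ref{thm_MRD_size} as you say. Both the main case and the degenerate case $d>2\min(k,n-k)$ are handled correctly, so there is nothing to add.
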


The subspace distance \highlight{between} two subspaces with the same pivots can be computed by the rank
distance of the corresponding generator matrices.
\highlightgreen{Using $\tau$ from~(\ref{eqn:tau}), we have:}
\begin{lemma} (\cite[Corollary 3]{MR2801585})
  \label{lemma_same_ef}
  Let $v\in\mathbb{F}_2^n$ and $\highlight{\tau(U),\tau(W)}\in \EF{v}{q}$, then
  $d_S(U,W)=2\cdot d_R\Big(\tau(U),\tau(W)\Big)$.
\end{lemma}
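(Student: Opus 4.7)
The plan is to convert the subspace distance into a dimension-of-sum statement and then read that dimension off the stacked matrix $\left(\begin{smallmatrix}\tau(U)\\ \tau(W)\end{smallmatrix}\right)$ after a row operation.

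First I would note that, letting $k$ be the weight of $v$, both $U$ and $W$ have dimension $k$, so by inclusion--exclusion
\[
  d_S(U,W)=2k-2\dim(U\cap W)=2\bigl(\dim(U+W)-k\bigr).
\]
Hence it suffices to prove $\dim(U+W)=k+\rk\bigl(\tau(U)-\tau(W)\bigr)$.

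Next I would observe that $U+W$ is the row space of the stacked $2k\times n$ matrix with blocks $\tau(U)$ and $\tau(W)$. Applying the elementary row operation of replacing the lower block by $\tau(W)-\tau(U)$ does not change the row space, so $\dim(U+W)$ equals the rank of $\bigl(\begin{smallmatrix}\tau(U)\\ \tau(W)-\tau(U)\end{smallmatrix}\bigr)$.

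The crux is the claim that the row spaces of $\tau(U)$ and of $\tau(W)-\tau(U)$ intersect trivially, from which the rank of the stacked matrix splits as $k+\rk(\tau(W)-\tau(U))$ and the lemma follows. To see this, I would use that $\tau(U)$ and $\tau(W)$ lie in $\EF{v}{q}$, so both have their identity columns in the positions indexed by $v$ and zeros elsewhere in those columns. Consequently $\tau(W)-\tau(U)$ has an all-zero column at every pivot position of $v$, so every vector in its row space is zero on the support of $v$. On the other hand, a nonzero linear combination $\sum_i c_i r_i$ of the rows $r_1,\dots,r_k$ of $\tau(U)$ has, because $\tau(U)$ is in RRE form with pivots described by $v$, the coefficient $c_i$ in the $i$-th pivot column; if such a combination were to vanish on the support of $v$, all $c_i$ would be zero. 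Thus the only common vector is $0$, establishing the claim. Combining the three steps gives $d_S(U,W)=2\,d_R(\tau(U),\tau(W))$, as desired.

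The only genuinely substantive step is the trivial-intersection argument; the rest is bookkeeping. I expect no further obstacle, since the shared pivot pattern makes the RRE structure do all the work.
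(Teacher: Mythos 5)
Your proof is correct. The paper itself imports this lemma from \cite{MR2801585} without proof, but your argument --- rewriting $d_S(U,W)=2(\dim(U+W)-k)$, subtracting one block of the stacked matrix from the other, and using that the shared pivot columns force the difference to vanish there so that the rank splits as $k+\rk(\tau(U)-\tau(W))$ --- is exactly the standard one, and it is the same rank-splitting technique the paper uses in its appendix proof of Lemma~\ref{lemma_dist_coset_construction} (the case $A=A'$, $B=B'$).
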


So, in order to construct \highlight{an} $(n,M,2\delta;k)$ code, it suffices to select
a subset of $\EF{v}{q}$ with minimum rank distance $\delta$.
\highlight{Additionally, we can further expand such a code by introducing codewords with different pivot columns as long
as the sets of pivot columns are sufficiently apart.}
Let
$d_H(v,v'):=\left|\left\{1\le i\le n\,:\,v_i\neq v_i'\right\}\right|$
denote the \emph{Hamming distance} for two binary vectors $v,v'\in\F{n}{2}$.

\begin{lemma} (\cite[Lemma 2]{etzion2009error})
  \label{lemma_different_ef}
  Let $v,v'\in\F{n}{2}$, $U\in \EF{v}{q}$, and $W\in \EF{v'}{q}$, then
  $d_S(U,W)\ge d_H(v,v')$.
\end{lemma}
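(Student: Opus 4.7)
The plan is to rewrite the subspace distance via the sum space and then translate the claim into a statement about pivot columns. Since $d_S(U,W) = \dim U + \dim W - 2\dim(U\cap W) = 2\dim(U+W) - \dim U - \dim W$, and $\dim U = \operatorname{wt}(v)$, $\dim W = \operatorname{wt}(v')$, it will suffice to prove the key inequality
\[
  \dim(U+W) \;\ge\; \operatorname{wt}(v \vee v'),
\]
where $v\vee v'$ denotes the coordinate-wise OR. Once this is in hand, the chain
\[
  d_S(U,W) \;\ge\; 2\operatorname{wt}(v\vee v') - \operatorname{wt}(v) - \operatorname{wt}(v')
  \;=\; \operatorname{wt}(v) + \operatorname{wt}(v') - 2\operatorname{wt}(v\wedge v')
  \;=\; d_H(v,v')
\]
closes the argument, using $\operatorname{wt}(v\vee v') = \operatorname{wt}(v)+\operatorname{wt}(v')-\operatorname{wt}(v\wedge v')$.

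To establish the key inequality, I would pass to generator matrices. Let $A = \tau(U)$ and $A' = \tau(W)$ be the RRE representatives, and consider the stacked matrix $M = \bigl(\begin{smallmatrix} A\\ A'\end{smallmatrix}\bigr)\in \mathbb{F}_q^{(\operatorname{wt}(v)+\operatorname{wt}(v'))\times n}$, whose row space is exactly $U+W$, so $\operatorname{rk}(M) = \dim(U+W)$. I then want to argue that after Gaussian elimination, the pivot vector $p(M)$ of $M$ satisfies $p(M) \ge v\vee v'$ coordinate-wise. The cleanest way to see this is by the characterization $i \in P(X) \iff \dim\bigl(X\cap V_{\ge i}\bigr) > \dim\bigl(X\cap V_{\ge i+1}\bigr)$, where $V_{\ge i} := \{x\in\mathbb{F}_q^n : x_1=\dots=x_{i-1}=0\}$: since a witness vector in $U$ (or $W$) for a pivot at position $i$ automatically lies in $U+W$, we get $P(U+W)\supseteq P(U)\cup P(W)$, and therefore $\dim(U+W) = |P(U+W)| \ge |P(U)\cup P(W)| = \operatorname{wt}(v\vee v')$.

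The only non-routine step is the pivot-monotonicity in the previous paragraph; once that is isolated, everything else is arithmetic. I would present it either via the above filtration $V_{\ge i}$, or equivalently by noting that each pivot row of $A$ (respectively $A'$) is a vector in $U+W$ with leading nonzero in position $i$ for each $i$ with $v_i=1$ (respectively $v'_i=1$), so row-reducing $M$ must yield leading ones in every such column. No appeal to structural properties of $\operatorname{EF}_q(v)$ beyond its definition is needed, which is reassuring since the lemma also applies in the trivial regime $v=v'$ (where it gives $d_S\ge 0$, consistent with Lemma~\ref{lemma_same_ef}).
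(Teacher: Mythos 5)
The paper does not prove this lemma at all --- it is imported verbatim as \cite[Lemma 2]{etzion2009error} and used as a black box --- so there is no internal proof to compare against. Judged on its own, your argument is correct and complete, and it is essentially the standard proof from the cited source: rewrite $d_S(U,W)=2\dim(U+W)-\dim U-\dim W$, observe that the set of pivot positions of $U+W$ contains the union of the pivot sets of $U$ and $W$ (a vector of $U$ or $W$ with leading nonzero entry in column $i$ also lies in $U+W$, forcing a dimension drop in the filtration $(U+W)\cap V_{\ge i}$ at that index), and then the identity $\operatorname{wt}(v\vee v')=\operatorname{wt}(v)+\operatorname{wt}(v')-\operatorname{wt}(v\wedge v')$ turns $2\operatorname{wt}(v\vee v')-\operatorname{wt}(v)-\operatorname{wt}(v')$ into $d_H(v,v')$. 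The only step you lean on implicitly is that the number of indices at which the filtration drops equals $\dim(U+W)$ (each quotient $\bigl((U+W)\cap V_{\ge i}\bigr)/\bigl((U+W)\cap V_{\ge i+1}\bigr)$ embeds into a one-dimensional coordinate space, so each drop is by exactly $1$); stating that one line would make the proof fully self-contained. No gap.
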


Having Lemma~\ref{lemma_same_ef} and Lemma~\ref{lemma_different_ef} at hand, the
Echelon-Ferrers construction from~\cite{etzion2009error} works as follows:
For two integers $k$ and $\delta$ choose a binary
constant weight code $\mathcal{S}$ of length $n$, weight $k$, and minimum
Hamming distance $2\delta$ as a so-called \emph{skeleton code}. For each
$s\in \mathcal{S}$ construct a code $\mathcal{C}_s\subseteq \EF{s}{q}$
having a minimum rank distance of at least $\delta$. Setting
$\mathcal{C}=\cup_{s\in\mathcal{S}} \mathcal{C}_s$ yields \highlight{an} $(n,M,2\delta;k)$
code\highlight{, where $M=\sum_{s \in S} |C_s|$}.
\highlightgreen{We remark that Lemma~\ref{lemma_different_ef} does not need two
binary vectors $v,v'$ of the same weight, i.e., the very same approach can be used
to \highlight{construct general subspace codes in which the codewords may have different dimensions}.
The only necessary modification is to choose
a general binary code $\mathcal{S}$ of length $n$ and minimum
Hamming distance $d$ as skeleton code. The codes $\mathcal{C}_s$ need to
have a rank distance of at least $d/2$.
}

For a given binary vector $v\in\mathbb{F}_2^n$ and an integer $1\le\delta\le n$ let
$q^{\dim(v,\delta)}$ be the largest cardinality of a linear rank-metric code over
$\EF{v}{q}$ with rank distance at least $\delta$.
\begin{theorem} (\cite[Theorem 1]{etzion2009error})
  \label{thm_mrd_ferrers}
  For a given $i$, $0\le i\le \delta-1$, if $\nu_i$ is the number of dots in the
  Echelon-Ferrers diagram corresponding to $v$, which are not contained in the
  first $i$ rows and not contained in the rightmost $\delta-1-i$ columns, then
  $\min_i\{\nu_i\}$ is an upper bound of $\dim(v,\delta)$.
\end{theorem}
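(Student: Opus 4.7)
The plan is to prove the stronger statement $\dim(v,\delta)\le \nu_i$ for every fixed $i\in\{0,\dots,\delta-1\}$ separately, from which the minimum bound follows at once. Accordingly, I would start from an arbitrary linear rank-metric code $\mathcal{C}$ whose matrices are supported on the dots of the Ferrers diagram of $v$ and whose minimum rank distance is at least $\delta$, and aim to bound $\dim_{\mathbb{F}_q}\mathcal{C}$ by $\nu_i$.

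The key device is the $\mathbb{F}_q$-linear coordinate projection $\pi_i\colon\mathcal{C}\to \mathbb{F}_q^{\nu_i}$ that reads off precisely the entries lying on the $\nu_i$ dots which are neither in the top $i$ rows nor in the rightmost $\delta-1-i$ columns. Since the image has $\mathbb{F}_q$-dimension at most $\nu_i$, it suffices to show that $\pi_i$ is injective. Assuming $A\in \ker\pi_i$, the support of $A$ is confined to the union of the first $i$ rows and the last $\delta-1-i$ columns. Writing $A$ as a sum of a matrix whose nonzero entries lie only in the first $i$ rows (rank $\le i$) and one whose nonzero entries lie only in the last $\delta-1-i$ columns (rank $\le \delta-1-i$), subadditivity of rank gives $\rk(A)\le i+(\delta-1-i)=\delta-1$. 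Because $\mathcal{C}$ is linear, $A$ itself is a codeword, and comparing $A$ to the zero codeword forces $A=0$ by the minimum-distance hypothesis. Hence $\dim_{\mathbb{F}_q}\mathcal{C}\le \nu_i$, and taking the minimum over $i$ completes the proof.

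I do not expect any genuine obstacle: the argument is a clean puncture-style bound once the geometric setup is fixed. The only point requiring care is to make precise that $\dim(v,\delta)$ refers to the $\mathbb{F}_q$-dimension of the code viewed as a subspace of the ``free'' Ferrers positions (the identity columns and zero columns being held constant), so that subtraction of codewords stays within $\EF{v}{q}$ and the rank-subadditivity step cleanly yields exactly $\delta-1$.
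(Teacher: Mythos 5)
Your argument is correct and is essentially the original proof of \cite[Theorem 1]{etzion2009error}: the paper at hand only cites this result and gives no proof of its own, but the source's argument is exactly your puncturing step, namely that a codeword vanishing on the $\nu_i$ retained dots splits into a matrix supported on $i$ rows plus one supported on $\delta-1-i$ columns and hence has rank at most $\delta-1$, forcing it to be zero. The one point worth stating explicitly is how you split the overlap of the first $i$ rows with the last $\delta-1-i$ columns (assign it to either summand), but this is routine and does not affect the bound.
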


The conjecture that the upper bound of Theorem~\ref{thm_mrd_ferrers} can be obtained
for all parameters is still unrefuted \highlight{and valid in many cases, see~\cite{MR3480069}}.
Several of the currently best known lower bounds
for constant dimension codes are obtained via the Echelon-Ferrers construction.
We remark that for the special binary vector $v=(1,\dots,1,0,\dots,0)$ of length $n$ and
weight $k$, the rank-metric codes of maximum cardinality in $\EF{v}{q}$ are given by
lifted MRD codes, see Theorem~\ref{thm_lifted_MRD}. So, the Echelon-Ferrers construction
uses building blocks \highlight{that are lifted MRD codes with a prescribed structure.}
\highlight{It} is possible to improve the best currently known upper bounds on
$A_q(n,d;k)$ for constant dimension codes that contain \highlight{a} lifted MRD code.

\begin{theorem}(see \cite[\highlight{Theorems} 10 and 11]{etzion2013codes})
\label{thm_MRD_upper_bound}
Let $\mathcal{C}\subseteq \G{n}{k}{q}$, where $n\ge 2k$, with minimum
subspace distance $d$ \highlight{contain a lifted MRD code.}
\begin{itemize}
\item If $d=2(k-1)$ and $k \ge 3$, then $\left|\mathcal{C}\right| \le q^{2(n-k)} + A_q(n-k,2(k-2);k-1)$;
\item if $d=k$, where $k$ is even, then $\left|\mathcal{C}\right| \le q^{(n-k)(k/2+1)} +
      \gaussm{n-k}{k/2}{q}\frac{q^n-q^{n-k}}{q^{k}-q^{k/2}} + A_q(n-k,k;k)$.
\end{itemize}
\end{theorem}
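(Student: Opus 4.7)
The plan is to exploit the rigid geometric structure forced by embedding a full lifted MRD code into $\mathcal{C}$: after coordinates are fixed, any additional codeword is heavily constrained by the MRD-compatibility condition, and the remaining degrees of freedom can be enumerated.

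\textbf{Setup.} Without loss of generality, assume the lifted MRD code $\mathcal{C}_0\subseteq\mathcal{C}$ has pivot vector $v^*=(1^k,0^{n-k})$, so $\mathcal{C}_0=\{[I_k\mid M]:M\in\mathcal{M}\}$ for some MRD code $\mathcal{M}\subseteq\mathbb{F}_q^{k\times(n-k)}$ of minimum rank distance $d/2$. Then $|\mathcal{C}_0|=M(q,k,n,d)$ by Theorem~\ref{thm_lifted_MRD}, which is $q^{2(n-k)}$ in case~(i) and $q^{(n-k)(k/2+1)}$ in case~(ii); this accounts for the first term of each bound. Let $W_0=\{0\}^k\times\mathbb{F}_q^{n-k}\in\G{n}{n-k}{q}$; by construction $U\cap W_0=\{0\}$ for every $U\in\mathcal{C}_0$.

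\textbf{Classification.} For $W\in\mathcal{C}\setminus\mathcal{C}_0$ with pivot vector $v'$ of weight~$k$, let $j$ be the number of ones of $v'$ in the first $k$ positions; then $\dim(W\cap W_0)=k-j$ and Lemma~\ref{lemma_different_ef} gives $d_S(W,U)\ge d_H(v^*,v')=2(k-j)$ for every $U\in\mathcal{C}_0$. Hence the MRD condition is automatically met whenever $j\le k-d/2$. The key step is to rule out $j>k-d/2$: for such $W$, using that $\mathcal{M}$ realizes every admissible rank, one exhibits an $M\in\mathcal{M}$ such that $\dim(W\cap[I_k\mid M])>k-d/2$, contradicting $d_S\ge d$.

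\textbf{Case~(i), $d=2(k-1)$.} Only $j\in\{0,1\}$ survives. Codewords with $j=0$ satisfy $W\subseteq W_0$, while $j=1$ gives $W=\langle u\rangle\oplus W'$ with $u\notin W_0$ and $W'=W\cap W_0\in\G{n-k}{k-1}{q}$. A direct subspace-distance computation shows that in both classes the associated $(k-1)$-dimensional subspace $W'\subseteq W_0$ (taken as $W\cap W_0$ for $j=1$, and choosing any $(k-1)$-subspace of $W$ for $j=0$, where one checks the reduction is distance-preserving for $k\ge3$) yields a constant-dimension code in $W_0\cong\F{n-k}{q}$ of minimum distance $2(k-2)$. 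Moreover, $W'$ uniquely determines $W$: any two codewords sharing $W'$ would have $\dim(W_1\cap W_2)\ge k-1>1$, violating the distance bound $\dim(W_1\cap W_2)\le1$. This gives at most $A_q(n-k,2(k-2);k-1)$ further codewords, yielding the stated bound.

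\textbf{Case~(ii), $d=k$.} Admissible values are $j\in\{0,1,\dots,k/2\}$. The class $j=0$ contributes a $k$-dimensional code in $W_0$, bounded by $A_q(n-k,k;k)$. For $j=k/2$, the subspace $W\cap W_0$ is a $(k/2)$-subspace of $W_0$; there are $\gaussm{n-k}{k/2}{q}$ such subspaces, and for each one the admissible ``transversal'' components (that is, the possible $W$ with fixed $W\cap W_0$, modulo the equivalence identifying different generators of the same $W$) are in bijection with cosets of a stabilizer of order $q^k-q^{k/2}$ in $\F{n}{q}\setminus W_0$ (of size $q^n-q^{n-k}$), contributing the middle factor $\frac{q^n-q^{n-k}}{q^k-q^{k/2}}$. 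The intermediate values $0<j<k/2$ must be shown to add no further codewords beyond those already counted in the extremal layers — this is the technical heart of the case.

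\textbf{Main obstacle.} The crux of both parts is using the \emph{fullness} of the embedded MRD $\mathcal{M}$ to eliminate intermediate pivot configurations: for a candidate $W$ with forbidden $j$, one must produce a specific $M\in\mathcal{M}$ realizing an intersection $\dim(W\cap[I_k\mid M])$ that exceeds $k-d/2$. This forces an explicit rank-distribution argument inside $\mathcal{M}$, which together with the counting in the $j=k/2$ stratum of case~(ii) — where the parametrization of admissible transversal extensions must be carried out modulo the correct stabilizer — constitutes the combinatorially delicate part.
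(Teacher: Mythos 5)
The paper does not actually prove Theorem~\ref{thm_MRD_upper_bound}; it is imported verbatim from Theorems~10 and~11 of \cite{etzion2013codes}, so there is no in-paper proof to compare against. Judged on its own terms, your outline correctly reproduces the skeleton of the Etzion--Silberstein argument (fix $W_0=\{0\}^k\times\F{n-k}{q}$, stratify $\mathcal{C}\setminus\mathcal{C}_0$ by $\dim(W\cap W_0)$, bound each stratum), but the two steps you yourself label ``the key step'' and ``the technical heart'' are precisely the content of the theorem, and they are asserted rather than proved. Moreover, the mechanism you invoke for the first one is not the right one: excluding $j>k-d/2$ does not come from ``$\mathcal{M}$ realizes every admissible rank'' but from the covering property of lifted MRD codes. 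Concretely, for any full-rank $A\in\mathbb{F}_q^{(k-d/2+1)\times k}$ the map $M\mapsto AM$ is injective on $\mathcal{M}$ (if $A(M-M')=0$ then the left kernel of $M-M'$ has dimension at least $k-d/2+1$, forcing $\rk(M-M')<d/2$), hence bijective by comparing $|\mathcal{M}|=q^{(n-k)(k-d/2+1)}$ with $|\mathbb{F}_q^{(k-d/2+1)\times(n-k)}|$; therefore every $(k-d/2+1)$-dimensional subspace meeting $W_0$ trivially is contained in a codeword of $\mathcal{C}_0$. Applying this to a suitable $(k-d/2+1)$-dimensional subspace of a $W$ with $\dim(W\cap W_0)<d/2$ produces $U\in\mathcal{C}_0$ with $d_S(W,U)\le d-2$. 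Without this lemma your classification has no floor and neither bound follows.

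In case~(ii) the remaining gaps are substantive rather than routine. The strata with $k/2<\dim(W\cap W_0)<k$ (your intermediate $j$) are not empty a priori and are not ``already counted in the extremal layers''; they have to be absorbed into the middle term. The standard route is a packing argument, not the ``cosets of a stabilizer'' bijection you gesture at: assign to each codeword $W$ with $k/2\le\dim(W\cap W_0)<k$ all pairs $(T,v)$ where $T$ is a $(k/2)$-dimensional subspace of $W\cap W_0$ and $v\in W\setminus W_0$. Two distinct codewords cannot share a pair, since $T+\langle v\rangle$ would be a $(k/2+1)$-dimensional subspace of $W_1\cap W_2$, contradicting $\dim(W_1\cap W_2)\le k-d/2=k/2$. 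Each codeword accounts for at least $q^k-q^{k/2}$ pairs (the minimum of $\gaussm{s}{k/2}{q}(q^k-q^s)$ over $k/2\le s<k$ is attained at $s=k/2$, which needs a short verification), and there are $\gaussm{n-k}{k/2}{q}\left(q^n-q^{n-k}\right)$ pairs available, giving the middle term. Your case~(i) reduction is essentially complete once the covering lemma is in place. As written, the proposal is a plausible roadmap for the external proof, not a proof.
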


\subsection{\texorpdfstring{Parallelisms and packings of $\G{n}{k}{q}$}{Parallelisms and packings of Gqnk}}
\label{subsec_packing}
Let $X$ be a set. A \emph{packing} $P=\left\{P_1,\dots,P_l\right\}$ of $X$ is a set of subsets $P_i\subseteq X$
such that $P_i\cap P_j=\emptyset$ for all $1\le i<j\le l$, i.e., the subsets $P_i$ are pairwise disjoint.
\highlight{A \emph{point} is an element of $\G{n}{1}{q}$ and a} \emph{spread} is a subset of $\G{n}{k}{q}$
that partitions the corresponding set of points, i.e.,
the elements have a pairwise trivial intersection. Counting the points yields that the size of a spread
is $\frac{\gaussm{n}{1}{q}}{\gaussm{k}{1}{q}}=\frac{q^n-1}{q^k-1}$. A spread is a special constant dimension
code with subspace distance $d=2\cdot k$. Spreads exist if and only if $k$ divides $n$, see \cite{andre1954nicht}.
With this, a \emph{parallelism} in $\G{n}{k}{q}$ is a packing of spreads such that it partitions $\G{n}{k}{q}$.

Parallelisms in $\G{n}{k}{q}$ are known to exist for:
\begin{enumerate}
\item $q=2, k=2$ and $n$ even;
\item $k=2$, all $q$ and $n=2^m$ for $m\ge 2$;
\item $n=4$, $k=2$, and $q\equiv 2\pmod 3$;
\item $q=2, k=3, n=6$,
\end{enumerate}
see e.g.\ \cite{etzion2015galois}.

\section{The coset construction}
\label{sec_coset_construction}

\highlight{Construction~III in \cite{etzion2013codes} gives $A_2(8,4;4)\ge 4797$.
While this specific construction does not involve parameters, the authors conjecture
that the underlying idea can be generalized to further parameters assuming the existence
of a corresponding parallelism. In Theorem~\ref{thm_coset_parallelism} in
Subsection~\ref{subsec_8_4_4} we will show that this is indeed the case. Moreover,
there is a more general underlying construction, introduced as \textit{coset construction}
in this paper, that yields improvements of the
best-known lower bounds for constant dimension codes, see Section~\ref{sec_examples}.}

\highlight{The main idea of the coset construction is to use a collection of
codewords which will be part of a subspace code such that $\tau$ \highlightgreen{form~(\ref{eqn:tau})}, \highlight{i.e., the corresponding \highlightgreen{RRE} form,}
of each element of this collection is of the form }
\[
  \begin{pmatrix} A & \varphi_B(F)\\0 & B\end{pmatrix}.
\]
\highlight{Here, $A$ is the \highlightgreen{RRE} form of a $k'$-dimensional subspace in $\mathbb{F}_q^{n'}$ and $B$
is the \highlightgreen{RRE} form of a $k-k'$-dimensional subspace in $\mathbb{F}_q^{n-n'}$, so that we obtain a \highlightgreen{RRE} form
of a $k$-dimensional subspace $C(A,B,F)$ of $\mathbb{F}_q^{n}$.
\highlightgreen{Note that the integers $k'$ and $n'$ are respectively the same for any codeword in this collection although Lemma~\ref{lemma_use_2_coset_parts} allows to combine multiple such collections.}
$F$ is an arbitrary
$k'\times (n-n'-k+k')$ matrix over $\mathbb{F}_q$, in which $\varphi_B$ inserts zero columns at the
pivot positions of $B$, see Subsection~\ref{subsec_gaussian_elimination} for the precise definition
of $\varphi_B$ and an example. In $C(A,B,F)$,} the vectors have
the shape $(\lambda \cdot A , \lambda \cdot F + \mu \cdot B)$.
So $\lambda \cdot F$ is the offset for the coset of the suffixes, i.e.,
the vector $\lambda \cdot A$ is prefix for every vector in the coset
$\lambda \cdot F + B$\highlightgreen{, explaining the naming of our construction}. \highlight{In order to obtain a constant dimension code with large minimum subspace distance, }
the matrices $A$, $B$, and $F$, \highlight{as well as their combinations, are chosen} from certain \highlightgreen{sets.}
\highlightgreen{Using $\tau$ from~(\ref{eqn:tau}), we have:}
\begin{lemma}
  \label{lemma_coset_construction}
  (Coset construction)
  Let $q$ be a prime power and $n,k,n',k'\in\mathbb{N}$ satisfy
  $1\le k\le n/2$, $1\le k'\le n'$, and $1\le k-k'\le n-n'$. Let
  further $\mathcal{A}=\dot\cup_{1\le i\le l} \mathcal{A}_i$,
  $\mathcal{B}=\dot\cup_{1\le i\le l} \mathcal{B}_i$, where
  $\emptyset\neq \mathcal{A}_i\subseteq\G{n'}{k'}{q}$ and $\emptyset\neq\mathcal{B}_i\subseteq\G{n-n'}{k-k'}{q}$
  for all $1\le i\le l$, and $\overline{F}\subseteq \mathbb{F}_q^{k'\times (n-n'-k+k')}$.
  With this, we have that $\mathcal{C}\left(\left(\mathcal{A}_i\right)_i,\left(\mathcal{B}_i\right)_i,\overline{F}\right):=$
  \[
    \left\{\tau^{-1}\begin{pmatrix}A&\varphi_B(F)\\0&B\end{pmatrix}\,:\,\tau^{-1}(A)\in\mathcal{A}_i,
    \tau^{-1}(B)\in\mathcal{B}_i,1\le i\le l, F\in\overline{F}\right\}
  \]
  is a subset of $\G{n}{k}{q}$, i.e., a constant dimension code where the codewords have
  dimension $k$.
\end{lemma}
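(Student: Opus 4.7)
The plan is to verify, for each element of the prescribed set, two things: that the matrix
$\begin{pmatrix} A & \varphi_B(F)\\ 0 & B\end{pmatrix}$
is the reduced row echelon form of some matrix of rank exactly $k$, and that its number of columns is $n$. Then $\tau^{-1}$ is well-defined on it and its image lies in $\G{n}{k}{q}$, which is precisely what needs to be shown. The block matrix has $k'+(k-k')=k$ rows and $n'+(n-n')=n$ columns, so the dimension check is immediate.

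For the rank, since $\tau^{-1}(A)\in\mathcal{A}_i\subseteq\G{n'}{k'}{q}$, the top block $A$ is a full-row-rank $k'\times n'$ matrix, and similarly $B$ is a full-row-rank $(k-k')\times(n-n')$ matrix. The zero bottom-left block makes the overall matrix block upper triangular in the sense that any linear combination of the $k$ rows that vanishes must first kill the bottom part (forcing the $k-k'$ coefficients used on rows of $B$ to vanish, by independence of rows of $B$), and then kill the top part (forcing the remaining $k'$ coefficients to vanish, by independence of rows of $A$). Hence the rank equals $k$.

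What remains, and is the only genuinely delicate point, is the verification of RRE form. The pivot columns contributed by the top block are exactly the pivots of $A$, occurring among the first $n'$ columns; in these columns the lower block is zero, so the RRE conditions at these positions are inherited from $A$. For the bottom block, the pivot columns of $B$, shifted by $n'$, become the pivot column candidates of the full matrix among the last $n-n'$ columns. The key observation is that the definition of $\varphi_B(F)$ places a zero column of $\varphi_B(F)$ at exactly each pivot position of $B$ (these are the indices $i$ with $v_i=1$ in the notation of Subsection~\ref{subsec_gaussian_elimination}). Thus above each pivot $1$ of $B$ the entry is $0$, and the RRE form conditions (pivots strictly advance to the right as one reads rows from top to bottom, each pivot is the only non-zero entry in its column, and the pivots in the top block precede those in the bottom block because all pivots of $A$ are among columns $1,\dots,n'$) are satisfied.

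Having checked that each such block matrix is a $k\times n$ matrix in RRE form of rank $k$, the bijectivity of $\tau$ from~(\ref{eqn:tau}) ensures that $\tau^{-1}$ maps it to a unique element of $\G{n}{k}{q}$. Consequently the entire constructed set is a subset of $\G{n}{k}{q}$, as claimed. The main obstacle is purely bookkeeping: making sure the pivot-preservation role of $\varphi_B$ is spelled out, which is the reason the coset construction produces an RRE form in a single step without a further reduction.
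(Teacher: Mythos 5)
Your proof is correct and follows essentially the same route as the paper's: check the block dimensions, note that $A$ and $B$ are full-rank matrices in RRE form, and use the fact that $\varphi_B$ inserts zero columns exactly at the pivot positions of $B$ so that the block matrix is itself in RRE form with $k$ pivots and hence of rank $k$. You merely spell out the RRE verification in more detail than the paper does.
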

\begin{proof}
  For an arbitrary but fixed index $1\le i\le l$ let $A$, $B$ be matrices with
  $\tau^{-1}(A)\in\mathcal{A}_i$ and $\tau^{-1}(B)\in\mathcal{B}_i$. We can easily
  check that $A\in\mathbb{F}_q^{k'\times n'}$ is a full-rank matrix in \highlightgreen{RRE} form.
  Similarly, $B\in\mathbb{F}_q^{(k-k')\times (n-n')}$ is a full-rank matrix in \highlightgreen{RRE} form.
  For each matrix $F\in\overline{F}$ we have $F\in\mathbb{F}_q^{k'\times(n-n'-k+\highlight{k')}}$, so that
  $\varphi_B(F)\in \mathbb{F}_q^{k'\times(n-n')}$. The dimensions fit so that
  \[
    M:=\begin{pmatrix}A&\varphi_B(F)\\0&B\end{pmatrix}\in \mathbb{F}_q^{k\times n}.
  \]
  Moreover $\varphi_B(F)$ has zero columns at the positions of the pivot columns of $B$. Since
  $A$ has $k'$ and $B$ has $k-k'$ pivot columns, $M$ has exactly $k$ pivot columns and full rank.
  Thus, $\tau^{-1}(M)\in\G{n}{k}{q}$.
\end{proof}

The number $l$ of disjoint subsets for $\mathcal{A}$ and $\mathcal{B}$ is called the
\emph{length} of the specific coset construction.
We remark that we have excluded the ranges for the parameters $k',n'$ where the construction
would be degenerated in the sense that either $A$ or $B$ have to be empty matrices. Nevertheless,
the \highlight{degenerate} case $k'=k$ has a nice interpretation. Here $B$ is an empty matrix and $A$ is a
$k\times n'$ matrix. If additionally $n'=k$ then $A$ is an identity matrix and we are in the case
of lifted MRD codes.
\highlightgreen{Using $\tau$ from~(\ref{eqn:tau}), we have:}
\begin{lemma}
  \label{lemma_dist_coset_construction}
  Let $q,n,k,n',k'$ be parameters satisfying the conditions from Lemma~\ref{lemma_coset_construction},
  $A,A'\in\mathbb{F}_q^{k'\times n'}$ and $B,B'\in \mathbb{F}_q^{(k-k')\times (n-n')}$ be full-rank
  matrices in \highlightgreen{RRE} form. Let further $d$ be a positive integer and $F,F'\in \mathbb{F}_q^{k'\times(n-n'-k+k')}$.
  If
  \begin{equation}
    \label{ie_combined_distance_condition}
    d_S(\tau^{-1}(A),\tau^{-1}(A'))+d_S(\tau^{-1}(B),\tau^{-1}(B')) \ge d
  \end{equation}
  or $d_R(F,F') \ge d/2$ then
  \begin{align*}
  d_S\left(\tau^{-1}
  \begin{pmatrix}A&\varphi_B(F)\\0&B\end{pmatrix}
  ,\tau^{-1}
  \begin{pmatrix}A'&\varphi_{B'}(F')\\0&B'\end{pmatrix}
  \right)\ge d.
\end{align*}
\end{lemma}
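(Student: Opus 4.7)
The plan is to establish the distance bound by a case analysis matching the two disjuncts of the hypothesis. Let $M := \begin{pmatrix}A & \varphi_B(F)\\ 0 & B\end{pmatrix}$, $M' := \begin{pmatrix}A' & \varphi_{B'}(F')\\ 0 & B'\end{pmatrix}$, $U := \tau^{-1}(M)$, $W := \tau^{-1}(M')$, and abbreviate $U_A := \tau^{-1}(A)$, $W_A := \tau^{-1}(A')$, $U_B := \tau^{-1}(B)$, $W_B := \tau^{-1}(B')$.

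For the disjunct~\eqref{ie_combined_distance_condition}, my strategy is to prove the unconditional additive inequality
\[
 d_S(U, W) \;\ge\; d_S(U_A, W_A) + d_S(U_B, W_B),
\]
which is clearly sufficient. The tool will be the coordinate projection $\pi: \F{n}{q} \to \F{n'}{q}$ onto the first $n'$ coordinates. The block shape of the two generators forces $\pi(U) = U_A$ and $\pi(W) = W_A$ (hence $\pi(U+W) = U_A + W_A$), while the zero block in the lower left of each generator forces $\{0\} \oplus U_B \subseteq U$ and $\{0\} \oplus W_B \subseteq W$, both lying in $\ker\pi$. Substituting into the identity $\dim(U+W) = \dim\pi(U+W) + \dim\bigl((U+W)\cap\ker\pi\bigr)$ and lower-bounding the kernel term by $\dim(U_B + W_B)$ yields $\dim(U+W) \ge \dim(U_A+W_A) + \dim(U_B+W_B)$, which becomes the advertised additive bound after conversion through $d_S(X,Y) = 2\dim(X+Y) - \dim X - \dim Y$.

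For the disjunct $d_R(F,F') \ge d/2$, the regime in which it delivers the conclusion on its own (and in which it is actually invoked by the coset construction) is when $A = A'$ and $B = B'$; otherwise the first disjunct is the relevant one. Under this alignment, $M$ and $M'$ share identical pivot columns (those of $A$ in the first $n'$ positions together with those of $B$ shifted by $n'$), so $M, M' \in \EF{p(M)}{q}$, and Lemma~\ref{lemma_same_ef} reduces subspace distance to rank distance: $d_S(U,W) = 2\,d_R(M,M')$. A direct block computation gives $M - M' = \begin{pmatrix} 0 & \varphi_B(F-F')\\ 0 & 0\end{pmatrix}$, and since $\varphi_B$ merely inserts zero columns, its rank equals $\rk(F - F') = d_R(F,F')$, so that $d_S(U,W) = 2\,d_R(F, F') \ge d$.

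The main obstacle is the identification $\ker(\pi|_U) = \{0\} \oplus U_B$ (and its analogue for $W$) used in the first case: this relies crucially on $A$ having full row rank, so that any row combination of $M$ whose first $n'$ coordinates vanish must come from the last $k-k'$ rows alone, whose span is exactly $\{0\} \oplus U_B$. Once this structural observation is in place, the remainder of each case reduces to a routine dimension or rank count.
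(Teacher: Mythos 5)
Your proof is correct and takes essentially the same route as the paper's: a case split on whether $(A,B)=(A',B')$, with the additive bound $d_S(U,W)\ge d_S(U_A,W_A)+d_S(U_B,W_B)$ in one case (you derive it via the projection onto the first $n'$ coordinates, the paper via the block-triangular rank inequality applied to the stacked $2k\times n$ matrix --- the same dimension count in different clothing) and the exact identity $d_S(U,W)=2\,d_R(F,F')$ in the aligned case (you via Lemma~\ref{lemma_same_ef}, the paper by direct rank computation). Your explicit restriction of the second disjunct to the case $A=A'$, $B=B'$ mirrors the paper's own proof, which covers exactly the same sub-cases.
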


\highlight{The proof is rather technical and can be found in the appendix.}

We remark that condition~(\ref{ie_combined_distance_condition}) of Lemma~\ref{lemma_dist_coset_construction}
is trivially satisfied for the special case of distance $d=4$\highlight{, if $A \ne A'$ and $B \ne B'$}.

Next we demonstrate that the coset construction from Lemma~\ref{lemma_coset_construction}
can in general not be obtained by an application of the Echelon-Ferrers construction.
\highlight{(For a more explicit example, see Theorem~\ref{thm_improve_10_6_4} in Subsection~\ref{subsec_i_10_6_4}.)}
It is easy to construct a family of examples with subspace distance $d$ but whose pivot vectors
have Hamming distance $2$, so that they cannot be used in the Echelon-Ferrers
construction. To this end, let $q$ be an arbitrary prime power, $d$ an even integer $\ge 2$,
and $n,k,n',k'\in\mathbb{N}$ such that $\frac{d}{4} \le k',n'-k',k-k',n-n'-k+k'$.
For the sake of this example we use:
\begin{align*}
A_1 &:= \begin{pmatrix} I_{k'-1} & 0 & M & 0 \\ 0 & 1 & 0 & 0 \end{pmatrix} & A_2 &:= \begin{pmatrix} I_{k'-1} & 0 & M+N & 0 \\ 0 & 0 & 0 & 1 \end{pmatrix} \\
B_1 &:= \begin{pmatrix} I_{k-k'} & M' \end{pmatrix} & B_2 &:= \begin{pmatrix} I_{k-k'} & M' + N' \end{pmatrix}
\end{align*}
with arbitrary matrices $M , N \in \mathbb{F}_q^{(k'-1) \times (n'-k'-1)}$ of full rank,

$M',N' \in \mathbb{F}_q^{(k-k')
\times (n-n'-k+k')}$, where $I_\star$ denotes the identity matrix.
Then, for arbitrary $F_1,F_2 \in \mathbb{F}_q^{k' \times (n-n'-k+k')}$:
\[
d_H\left(
p\left(\begin{pmatrix}A_1&F_1\\0&B_1\end{pmatrix}\right),
p\left(\begin{pmatrix}A_2&F_2\\0&B_2\end{pmatrix}\right)
\right) = 2
\]
but
\begin{align*}
&d_S\left(
\begin{pmatrix}A_1&F_1\\0&B_1\end{pmatrix},
\begin{pmatrix}A_2&F_2\\0&B_2\end{pmatrix}
\right)\\
&\ge 2\left(
\rk\begin{pmatrix}A_1\\A_2\end{pmatrix}
+
\rk\begin{pmatrix}B_1\\B_2\end{pmatrix}
-k\right)
\\
&= 2((k'+1+\rk(N))+(k-k'+\rk(N'))-k)
\\
&= \highlight{2(
\min\{k' ,n'-k'\}+
\min\{k-k',n-n'-k+k'\}
)}
\\
&
\ge d.
\end{align*}

\subsection{A multilevel coset construction}
\label{subsec_multilevel}

In this subsection we want to use the coset construction
in \highlight{combination} with other \highlight{constructions}. At first we show
that it is compatible with the Echelon-Ferrers construction.
\highlightgreen{Using $\tau$ from~(\ref{eqn:tau}), we have:}
\begin{lemma}
  \label{lemma_multilevel_coset_hamming}
  \highlight{
  For a prime power $q$ and $n,k,n',k',\tilde{k}\in\mathbb{N}$ satisfying
  $1\le k\le n/2$, $1\le k'\le n'$, $1\le k-k'\le n-n'$, and $0 \le \tilde{k} \le n$,
  let} $U \in \G{n'}{k'}{q}$, $V \in \G{n-n'}{k-k'}{q}$, $F \in \mathbb{F}_q^{k' \times \highlight{(n-n'-k+k')}}$,
  and $X \in \G{n}{\tilde{k}}{q}$. Let $s$ be the sum of the first $n'$ entries in the pivot vector $p(X)$ of $X$,
  i.e., $s:= \sum_{i=1}^{n'}p(X)_i$. If $d \le \left|s-k'\right|+\left|\tilde{k}-s-k+k'\right|$
  then $d_S\left(X,W\right)\ge d$, where
  \[
    W=\tau^{-1}\left(\begin{pmatrix}\tau(U)&\varphi_{\tau(V)}(F)\\0&\tau(V)\end{pmatrix}\right).
  \]
\end{lemma}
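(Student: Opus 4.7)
The plan is to reduce the claim to Lemma~\ref{lemma_different_ef}, which bounds the subspace distance between two arbitrary subspaces below by the Hamming distance of their pivot vectors. Since $d_S(X,W) \ge d_H(p(X),p(W))$ by that lemma, it suffices to show that
\[
 d_H(p(X),p(W)) \;\ge\; |s-k'| + |\tilde{k}-s-k+k'|.
\]

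First I would identify $p(W)$ explicitly. By construction, $\tau(W)$ is the block matrix with $A=\tau(U)$ in the upper-left $k'\times n'$ position, $B=\tau(V)$ in the lower-right $(k-k')\times (n-n')$ position, the upper-right block $\varphi_B(F)$ carrying zero columns exactly at the pivot positions of $B$ (shifted by $n'$), and a zero block in the lower-left. The proof of Lemma~\ref{lemma_coset_construction} already shows that this block matrix is in RRE form of full rank $k$. Consequently the pivot vector of $W$ is the concatenation $p(W) = (p(U),p(V)) \in \mathbb{F}_2^{n'} \times \mathbb{F}_2^{n-n'}$, which has weight exactly $k'$ in its first $n'$ coordinates and weight exactly $k-k'$ in its last $n-n'$ coordinates.

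Next I would split $d_H(p(X),p(W))$ into the contributions from the first $n'$ and the last $n-n'$ coordinates. By hypothesis $p(X)$ has weight $s$ on the first $n'$ coordinates and weight $\tilde{k}-s$ on the last $n-n'$ coordinates. Using the elementary bound $d_H(u,u') \ge |\,\mathrm{wt}(u)-\mathrm{wt}(u')\,|$ for any two binary vectors of equal length, applied separately to each of the two blocks, yields
\[
 d_H(p(X),p(W)) \;\ge\; |s-k'| + |(\tilde{k}-s)-(k-k')|.
\]
Combining with Lemma~\ref{lemma_different_ef} gives the claimed bound. There is no real obstacle here; the only thing to be careful about is verifying that $p(W)$ truly decomposes as $(p(U),p(V))$, which follows immediately from the block-triangular RRE structure established in the proof of Lemma~\ref{lemma_coset_construction}.
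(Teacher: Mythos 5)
Your proposal is correct and follows essentially the same route as the paper's proof: both reduce to Lemma~\ref{lemma_different_ef} after observing that $p(W)$ has weight $k'$ on the first $n'$ coordinates and $k-k'$ on the rest, and then bound the Hamming distance blockwise by the weight differences. Your extra remark justifying $p(W)=(p(U),p(V))$ via the RRE structure from Lemma~\ref{lemma_coset_construction} is a welcome detail the paper leaves implicit.
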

\begin{proof}
  Let $x:=p(X)$ and $w:=p(W)$ \highlight{be} the pivot vectors of $X$ and $W$, respectively.
  From the construction we know $\sum_{i=1}^{n'} x_i=s$, $\sum_{i=1}^{n'} w_i=k'$,
  $\sum_{i=n'+1}^{n} x_i=\tilde{k}-s$, and $\sum_{i=n'+1}^{n} w_i=k-k'$, so that
  \[
    d_H(x,w)\ge \left|s-k'\right|+\left|(\tilde{k}-s)-(k-k')\right|\ge d.
  \]
  Applying Lemma~\ref{lemma_different_ef} yields the stated lower bound on the subspace distance.
\end{proof}

For the special case $\tilde{k}=k$, i.e., the constant dimension case, we have
$\left|s-k'\right|+\left|\tilde{k}-s-k+k'\right|=2\cdot\left|s-k'\right|$.
There is also an easy-to-check sufficient criterion \highlight{to determine} whether the union of two
codes constructed by the coset construction have a subspace distance of at least $d$.

\begin{lemma}
  \label{lemma_use_2_coset_parts}
  Let $\mathcal{C}_i$ be codes having subspace distance at least $d$ and that are obtained from
  the coset construction with suitable parameters $n$, $k_i$, $n'_i$, and $k'_i$ for $i=1,2$,
  where we assume \highlight{$n_1'\le n_2'$}. Let $f(m)=|m-k_1'|+|m-\gamma|$
  and

\highlight{  \[
    K=
    \begin{cases}
      f(\overline{\beta}) & \text{if } \overline{\beta} \le \lambda \\
      f(\lambda) & \text{if } \underline{\beta} \le \lambda < \overline{\beta} \\
      f(\underline{\beta}) & \text{else}
    \end{cases}\text{,}
  \]}

  where $\underline{\beta}=\max\{\highlight{k_2'-n_2'+n_1'},0\}$, $\overline{\beta}=\min\{\highlight{n_1'},k_2'\}$,
  $\gamma=k_1'+k_2-k_1$, \highlight{and $\lambda=\max\{\gamma,k_1'\}$}.
  If \highlight{$d \le K$},
  then $D_S\left(\mathcal{C}_1 \cup \mathcal{C}_2\right) \ge d$.
\end{lemma}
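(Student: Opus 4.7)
The plan is to reduce everything to a Hamming-distance bound on pivot vectors. Since each $\mathcal{C}_i$ individually has minimum subspace distance $\ge d$ by assumption, the only codewords to worry about are $c_1\in\mathcal{C}_1$ and $c_2\in\mathcal{C}_2$, and by Lemma~\ref{lemma_different_ef} it suffices to show $d_H\bigl(p(c_1),p(c_2)\bigr)\ge d$. The coset construction pins down the pivot structure: $p_1:=p(c_1)$ has exactly $k_1'$ ones in positions $1,\dots,n_1'$ and $k_1-k_1'$ ones in positions $n_1'+1,\dots,n$, and analogously $p_2:=p(c_2)$ splits at $n_2'\ge n_1'$.

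The main move is to introduce a parameter $m$ equal to the number of ones of $p_2$ falling in the first $n_1'$ positions, and to bound $d_H(p_1,p_2)$ in terms of $m$. First I verify $\underline{\beta}\le m\le\overline{\beta}$: trivially $m\le\min\{n_1',k_2'\}=\overline{\beta}$, and because the remaining $k_2'-m$ ones of $p_2$ inside positions $1,\dots,n_2'$ must fit into the $n_2'-n_1'$ slots after position $n_1'$, I get $m\ge k_2'-(n_2'-n_1')$, which together with $m\ge 0$ gives $m\ge\underline{\beta}$. Second, splitting the Hamming distance at position $n_1'$ and using that the Hamming distance on any window is at least the difference of the Hamming weights on that window yields
\[
d_H(p_1,p_2)\ge |k_1'-m|+\bigl|(k_1-k_1')-(k_2-m)\bigr|=|m-k_1'|+|m-\gamma|=f(m),
\]
with $\gamma=k_1'+k_2-k_1$ exactly as in the statement.

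What remains is to show $\min_{m\in[\underline{\beta},\overline{\beta}]}f(m)=K$; combined with the hypothesis $d\le K$ and Lemma~\ref{lemma_different_ef} this gives $d_S(c_1,c_2)\ge f(m)\ge K\ge d$. This last step is a routine convexity computation rather than a real obstacle: $f$ is piecewise linear and convex, strictly decreasing on $(-\infty,\min\{k_1',\gamma\}]$, constant with value $|k_1'-\gamma|$ on $[\min\{k_1',\gamma\},\lambda]$, and strictly increasing on $[\lambda,\infty)$. A three-way case analysis on the position of $[\underline{\beta},\overline{\beta}]$ relative to $\lambda$ then gives the minimum at $\overline{\beta}$ when $\overline{\beta}\le\lambda$, at $\lambda$ when $\underline{\beta}\le\lambda<\overline{\beta}$, and at $\underline{\beta}$ otherwise, matching the three cases of $K$ exactly. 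The only mildly subtle point is to observe that in case (i) the interval may already meet the flat part of $f$, in which case $f(\overline{\beta})=f(\lambda)=|k_1'-\gamma|$, so the formula is still correct.
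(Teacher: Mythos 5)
Your proposal is correct and follows essentially the same route as the paper: restrict to cross-code pairs, split the pivot vectors at position $n_1'$, parameterize by the weight $m$ of the first block of $p(c_2)$ with $\underline{\beta}\le m\le\overline{\beta}$, lower-bound each block's Hamming distance by the weight difference to get $d_H\ge f(m)$, and invoke Lemma~\ref{lemma_different_ef}. The only difference is that you spell out the convexity/case analysis showing $\min_{m\in[\underline{\beta},\overline{\beta}]}f(m)=K$, which the paper's proof leaves implicit with the phrase that $f(m)$ \emph{is minimized for} $K$.
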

\begin{proof}
  At first we observe that we have $d_H(u,v)\ge |a-b|$ for $u,v\in\mathbb{F}_2^n$
  with $\Vert u\Vert_1=a$ and $\Vert v\Vert_1=b$.

  We set $x:=p\left(W_1\right)$ and $y:=p\left(W_2\right)$, where \highlightgreen{the}
  $W_i$ are matrices corresponding to an arbitrary but fixed codeword from
  \highlight{$\mathcal{C}_i$, see} the formulation of Lemma~\ref{lemma_multilevel_coset_hamming}.

  Let $x^1$ consist of the first \highlight{$n_1'$} entries of $x$,
  $y^1$ consist of the first \highlight{$n_1'$} entries of $y$, $x^2$ consist of the last \highlight{$n-n_1'$} entries of $x$,
  and $y^2$ consist of the last \highlight{$n-n_1'$} entries of $y$. \highlight{For} $m:=\Vert y^1\Vert_1$, where
  \[
    \underline{\beta}=\max\{\highlight{k_2'-n_2'+n_1'},0\}\le m\le \min\{\highlight{n_1'},k_2'\}=\overline{\beta},
  \]
  we have $d_H\left(x^1,y^1\right)\ge |m-k_1'|$
  and $d_H\left(x^2,y^2\right)\ge |m-\gamma|$.
  Thus $f(m)\le d_H(x,y)$ is minimized for \highlight{$K$}. Applying
  Lemma~\ref{lemma_different_ef} yields the stated lower bound on the subspace distance.
\end{proof}

\highlight{
Considering the exemplary parameters $n=6$, $k_1=k_2=3$, $n_1'=n_2'=2$, $k_1'=1$ and $k_2'=2$ for the codes
$C_1=\left\{\left(
\begin{smallmatrix}
1&0&0&0&0&0\\
0&0&1&0&0&0\\
0&0&0&0&0&1\\
\end{smallmatrix}
\right)\right\}$
and
$C_2=\left\{\left(
\begin{smallmatrix}
1&0&0&0&0&0\\
0&1&0&0&0&0\\
0&0&1&0&0&0\\
\end{smallmatrix}
\right)\right\}$
Lemma~\ref{lemma_use_2_coset_parts} uses $\underline{\beta}=\overline{\beta}=2$, $\gamma=\lambda=1$,
$f(m)=2|m-1|$ and $K=f(\underline{\beta})=2$. This lower bounds any two codewords from two coset constructed
parts having these parameters, whereas the Hamming distance of the depicted pivot vectors is $2$.
}

We remark that Lemma~\ref{lemma_use_2_coset_parts} is best possible in the
sense that the estimations on the Hamming distance of two binary
vectors with known weights and weights of two suffixes, of possibly different
lengths, is tight. Performing similar analyses on generalized structures like
\[
  \begin{pmatrix} A & \varphi_{B}(F) & \varphi_{C}(G)\\0 & B & \varphi_{C}(H)\\ 0 & 0 & C\end{pmatrix}
\]
may have the potential to yield stronger bounds.

\section{Optimal choices for the parameters of the coset construction}
\label{sec_optimal_parameters}

\subsection{\highlightgreen{General reasoning}}

\highlight{Like the Echelon-Ferrers construction, the} coset construction from the previous section is far from being
explicit, i.e., there are several degrees of freedom. \highlight{In this section
we give several lower and upper bounds for the sizes of the codes obtained from the
coset construction, which allow to minimize the range of choices of the parameters
that can lead to improvements of the best-known bounds.}

The cardinality
of a subspace code obtained from the coset construction with length $l$ is given by
\begin{equation}
  \label{eq_cardinality_coset_construction}
  \left|\mathcal{C}\left(\left(\mathcal{A}_i\right)_i,\left(\mathcal{B}_i\right)_i,\overline{F}\right)\right|
  =\left|\overline{F}\right|\cdot \overset{\Lambda:=}{\overbrace{\sum_{i=1}^{l} \left|\mathcal{A}_i\right| \cdot \left|\mathcal{B}_i\right|}}.
\end{equation}
Given $q$, $n$, and the desired even subspace distance $d$, the aim is to maximize
(\ref{eq_cardinality_coset_construction}) under the restrictions of Lemma~\ref{lemma_dist_coset_construction}.
Obviously, this term is maximal if both $\left|\overline{F}\right|$ and the sum are maximal. Thus, we may
choose an MRD code, with appropriate parameters, for $\overline{F}$, so that
\[
  \left|\overline{F}\right|=\left\lceil q^{\max\{k',n-n'-k+k'\}\cdot(\min\{k',n-n'-k+k'\}-d/2+1)}\right\rceil
\]
\highlight{is optimal by} Theorem~\ref{thm_MRD_size}.

The sets $\mathcal{A}_i$ and $\mathcal{B}_i$ need to have additional structure.

\begin{lemma}
  \label{lemma_cdc_part}
  For a code obtained from the construction of Lemma~\ref{lemma_coset_construction}
  with $d:=D_S\Big(\mathcal{C}\left(\left(\mathcal{A}_i\right)_i,\left(\mathcal{B}_i\right)_i,\overline{F}\right)\Big)$,
  length $l$, and parameters $q,n,k,n',k'$ we have $D_S\left(\mathcal{A}_i\right)\ge d$ and
  $D_S\left(\mathcal{B}_i\right)\ge d$ for all $1\le i\le l$.
\end{lemma}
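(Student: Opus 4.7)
The plan is to reduce each distance in $\mathcal{A}_i$ (resp.\ $\mathcal{B}_i$) to a distance between two codewords in $\mathcal{C}$ by a lifting argument. Fix $i$, let $U,U'\in\mathcal{A}_i$ be distinct, and set $A:=\tau(U)$, $A':=\tau(U')$. Because $\mathcal{B}_i\neq\emptyset$ and $\overline{F}\neq\emptyset$ by the setup of Lemma~\ref{lemma_coset_construction}, I pick some $V\in\mathcal{B}_i$, set $B:=\tau(V)$, and choose some $F\in\overline{F}$. The two codewords
\[
  \hat U:=\tau^{-1}\!\begin{pmatrix}A&\varphi_B(F)\\0&B\end{pmatrix},\qquad
  \hat U':=\tau^{-1}\!\begin{pmatrix}A'&\varphi_B(F)\\0&B\end{pmatrix}
\]
both belong to $\mathcal{C}$, and they are distinct because $A\neq A'$ as reduced row echelon matrices forces the two block-RRE forms to differ in their top-left block. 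Hence $d_S(\hat U,\hat U')\geq d$.

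The crux is to show $d_S(\hat U,\hat U')\le d_S(U,U')$, because together with the previous inequality it yields $d_S(U,U')\ge d$ for every distinct pair, and thus $D_S(\mathcal{A}_i)\ge d$. I would analyze $\hat U\cap\hat U'$ as the sum of two contributions: the common bottom block $(0\;B)$ already spans the $(k-k')$-dimensional subspace $\{0\}\times\tau^{-1}(V)$ inside both $\hat U$ and $\hat U'$, and each $v\in U\cap U'$ lifts to a pair $(v,w)\in\hat U\cap\hat U'$ for a suitable $w\in\mathbb{F}_q^{n-n'}$; modulo $\{0\}\times\tau^{-1}(V)$ these lifts are unique. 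Adding dimensions gives
\[
  \dim(\hat U\cap\hat U')\ge \dim(U\cap U')+(k-k'),
\]
and since $\dim\hat U=\dim\hat U'=k$ this yields
\[
  d_S(\hat U,\hat U')=2k-2\dim(\hat U\cap\hat U')\le 2k'-2\dim(U\cap U')=d_S(U,U'),
\]
as required. A symmetric argument, with the roles of $\mathcal{A}_i,\mathcal{B}_i$ interchanged and the block positions rotated, yields $D_S(\mathcal{B}_i)\ge d$.

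The main obstacle is the existence of the lift $w$ for each $v\in U\cap U'$. Writing $v=\lambda A=\lambda' A'$ with the unique $\lambda,\lambda'\in\mathbb{F}_q^{k'}$ determined from the pivot columns of $A,A'$, a valid $w$ must satisfy $w=\lambda\varphi_B(F)+\mu B=\lambda'\varphi_B(F)+\mu' B$ for some $\mu,\mu'\in\mathbb{F}_q^{k-k'}$, i.e.\ $(\mu'-\mu)B=(\lambda-\lambda')\varphi_B(F)$. The compatibility of this linear equation hinges on the fact that $\varphi_B(F)$ vanishes at exactly the pivot columns of $B$, so its rows are already aligned with the non-pivot positions of the row space of $B$; exploiting this pivot-column structure is the delicate bookkeeping step of the proof. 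Once this lift is produced for every $v\in U\cap U'$, the dimension inequality above is established and both claims follow at once.
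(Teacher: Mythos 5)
The key step of your argument is the inequality $d_S(\hat U,\hat U')\le d_S(U,U')$, obtained from the claimed lift of every $v\in U\cap U'$ to an element of $\hat U\cap\hat U'$. This lift does not exist in general and the inequality is false; the correct inequality is the \emph{opposite} one, $d_S(\hat U,\hat U')\ge d_S(U,U')+d_S(V,V')$, which is exactly what the appendix proves for Lemma~\ref{lemma_dist_coset_construction}. Your own analysis of the obstruction already shows this: in the equation $(\mu'-\mu)B=(\lambda-\lambda')\varphi_B(F)$ the left-hand side lies in the row space of $B$ while the right-hand side vanishes on the pivot columns of $B$, so (as $B$ is in RRE form with full row rank) the equation forces $\mu'=\mu$ and then $(\lambda-\lambda')\varphi_B(F)=0$, i.e.\ $(\lambda-\lambda')F=0$. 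This fails whenever the coordinate vectors of $v$ with respect to $A$ and $A'$ differ and $F$ does not annihilate their difference. Concretely, take $q=2$, $n=6$, $k=3$, $n'=3$, $k'=2$, $A=\left(\begin{smallmatrix}1&0&0\\0&1&0\end{smallmatrix}\right)$, $A'=\left(\begin{smallmatrix}0&1&0\\0&0&1\end{smallmatrix}\right)$, $B=\left(\begin{smallmatrix}1&0&0\end{smallmatrix}\right)$, $F=\left(\begin{smallmatrix}1&0\\0&0\end{smallmatrix}\right)$: then $d_S(U,U')=2$, but $\hat U\cap\hat U'$ is the span of the fourth unit vector, so $\dim(\hat U\cap\hat U')=1<\dim(U\cap U')+(k-k')=2$ and $d_S(\hat U,\hat U')=4$.

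This is not a repairable bookkeeping issue. With $\mathcal{A}_1=\{U,U'\}$, $\mathcal{B}_1=\{V\}$, $\overline{F}=\{F\}$ as above, the constructed code consists of two codewords at subspace distance $4$ while $D_S(\mathcal{A}_1)=2$, so the conclusion simply does not hold for the \emph{actual} minimum distance of an arbitrary code of this shape; no upper bound on $d_S(\hat U,\hat U')$ in terms of $d_S(U,U')$ alone can exist. The paper's proof takes a different and much shorter route that reveals the intended reading: pick $U\neq U'\in\mathcal{A}_i$, $V=V'\in\mathcal{B}_i$ and $F=F'$, note that then $d_R(F,F')=0<d/2$, and read off from Condition~(\ref{ie_combined_distance_condition}) of Lemma~\ref{lemma_dist_coset_construction} that $d\le d_S(U,U')+d_S(V,V)=d_S(U,U')$. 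That is, $d$ is treated as the distance \emph{certified} by Lemma~\ref{lemma_dist_coset_construction}, for which Condition~(\ref{ie_combined_distance_condition}) is the only available certificate once $F=F'$; alternatively one can normalize $\overline{F}$ to contain $\mathbf{0}$ as in the proof of Lemma~\ref{lemma_min_dist_union} and take $F=F'=\mathbf{0}$, in which case $\hat U=U\times V$ and $\hat U'=U'\times V$ and your dimension count does become an equality. Without one of these two ingredients the argument cannot be completed.
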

\begin{proof}
  If $U\neq U'\in \mathcal{A}_i$, then there exists $V\in\mathcal{B}_i$ such that
  Condition~(\ref{ie_combined_distance_condition}) yields
  $d\le d_S(U,U')+d_S(V,V)=d_S(U,U')$. A similar conclusion can be drawn for
  the elements in $\mathcal{B}_i$.
\end{proof}

From this we can conclude an upper bound on $\Lambda$.
\begin{corollary}
  \label{cor_upper_bound_Lambda}
  Using the notation from Lemma~\ref{lemma_coset_construction} and
  Equation~(\ref{eq_cardinality_coset_construction}) we have
  \[
    \Lambda\le \min\left\{
    \gaussm{n'}{k'}{q}\cdot A_q(n-n',d;k-k'),
    \gaussm{n-n'}{k-k'}{q}\cdot A_q(n',d;k')
    \right\}.
  \]
\end{corollary}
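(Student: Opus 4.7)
My plan is to derive each of the two terms in the minimum by bounding one of the factors in $|\mathcal{A}_i|\cdot|\mathcal{B}_i|$ uniformly and the other through the disjointness hypothesis, then to take the minimum of the two symmetric bounds. The two ingredients are already in place: Lemma~\ref{lemma_cdc_part} guarantees that each $\mathcal{A}_i$ is a constant dimension code in $\G{n'}{k'}{q}$ of minimum subspace distance at least $d$, and each $\mathcal{B}_i$ is a constant dimension code in $\G{n-n'}{k-k'}{q}$ of minimum subspace distance at least $d$; and the assumptions of Lemma~\ref{lemma_coset_construction} provide the disjoint decompositions $\mathcal{A}=\dot\cup_i\mathcal{A}_i\subseteq\G{n'}{k'}{q}$ and $\mathcal{B}=\dot\cup_i\mathcal{B}_i\subseteq\G{n-n'}{k-k'}{q}$.

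First I would record the two consequences of disjointness, namely
\[
  \sum_{i=1}^l |\mathcal{A}_i|\le \gaussm{n'}{k'}{q} \qquad\text{and}\qquad \sum_{i=1}^l |\mathcal{B}_i|\le \gaussm{n-n'}{k-k'}{q},
\]
which hold because the $\mathcal{A}_i$ (respectively $\mathcal{B}_i$) are pairwise disjoint subsets of $\G{n'}{k'}{q}$ (respectively $\G{n-n'}{k-k'}{q}$), whose total number of elements is $\gaussm{n'}{k'}{q}$ (respectively $\gaussm{n-n'}{k-k'}{q}$). Next, by Lemma~\ref{lemma_cdc_part} and the definition of $A_q(\cdot,\cdot;\cdot)$, we have $|\mathcal{B}_i|\le A_q(n-n',d;k-k')$ and $|\mathcal{A}_i|\le A_q(n',d;k')$ for every $1\le i\le l$.

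To obtain the first term in the minimum, I would factor the uniform bound on $|\mathcal{B}_i|$ out of the sum:
\[
  \Lambda=\sum_{i=1}^l |\mathcal{A}_i|\cdot|\mathcal{B}_i|\le A_q(n-n',d;k-k')\cdot\sum_{i=1}^l |\mathcal{A}_i|\le A_q(n-n',d;k-k')\cdot\gaussm{n'}{k'}{q}.
\]
The analogous argument, now factoring the uniform bound on $|\mathcal{A}_i|$ out of the sum and using the corresponding disjointness inequality for $\mathcal{B}$, yields $\Lambda\le A_q(n',d;k')\cdot\gaussm{n-n'}{k-k'}{q}$. Taking the minimum of both bounds completes the proof.

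There is no real obstacle here; the entire argument is elementary once Lemma~\ref{lemma_cdc_part} is available. The only point to be a little careful about is remembering that the bound on $\sum_i|\mathcal{A}_i|$ is the crude Gaussian binomial coefficient rather than $A_q(n',d;k')$, because across different indices $i$ the distance condition need not be satisfied, so one cannot apply $A_q$ to the union $\mathcal{A}$ itself.
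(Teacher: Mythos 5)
Your proposal is correct and follows essentially the same route as the paper: apply Lemma~\ref{lemma_cdc_part} to bound one factor uniformly by the relevant $A_q(\cdot,d;\cdot)$, bound the sum of the other factors by the corresponding Gaussian binomial coefficient via disjointness, and then interchange the roles of the $\mathcal{A}_i$ and $\mathcal{B}_i$ to get the second term of the minimum. Your closing remark about why one cannot replace the Gaussian binomial coefficient by $A_q(n',d;k')$ is a correct and worthwhile clarification, but it does not change the argument.
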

\begin{proof}
  Due to Lemma~\ref{lemma_cdc_part} we have $\left|\mathcal{A}_i\right|\le A_q(n',d;k')$, so that
  \begin{eqnarray*}
    \sum_{i=1}^{l} \left|\mathcal{A}_i\right| \cdot \left|\mathcal{B}_i\right|
    \le A_q(n',d;k')\cdot \sum_{i=1}^{l} \left|\mathcal{B}_i\right|
    \le A_q(n',d;k')\cdot \gaussm{n-n'}{k-k'}{q}.
  \end{eqnarray*}
  Interchanging the roles of the $\mathcal{A}_i$ and $\mathcal{B}_i$ yields
  the other stated upper bound.
\end{proof}

\begin{corollary}
  The upper bound of Corollary~\ref{cor_upper_bound_Lambda} can be attained
  if $d\le 4$ and both $\G{n'}{k'}{q}$ and $\G{n-n'}{k-k'}{q}$
  admit parallelisms\highlightgreen{, e.g., the corresponding parameters are in the list in Subsection~\ref{subsec_packing}}.
\end{corollary}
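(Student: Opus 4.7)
The plan is to split into $d\in\{2,4\}$ and in both sub-cases to pick $\mathcal{A}_i$ and $\mathcal{B}_i$ that essentially pair up a parallelism of each Grassmannian, then use Lemma~\ref{lemma_dist_coset_construction} to check the distance.

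For $d=2$ parallelisms are not actually needed. I plan to set $l=1$, $\mathcal{A}_1=\G{n'}{k'}{q}$, and $\mathcal{B}_1=\G{n-n'}{k-k'}{q}$. The hypothesis of Lemma~\ref{lemma_dist_coset_construction} is automatic because distinct subspaces of a common dimension already satisfy $d_S\geq 2$, and combined with $A_q(m,2;r)=\gaussm{m}{r}{q}$ the count $\Lambda=\gaussm{n'}{k'}{q}\cdot\gaussm{n-n'}{k-k'}{q}$ equals both arguments of the min in Corollary~\ref{cor_upper_bound_Lambda}.

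For $d=4$ I would invoke both parallelisms. Write $\mathcal{P}_A=\{P_A^1,\dots,P_A^{l_A}\}$ and $\mathcal{P}_B=\{P_B^1,\dots,P_B^{l_B}\}$ for them and $a=(q^{n'}-1)/(q^{k'}-1)$, $b=(q^{n-n'}-1)/(q^{k-k'}-1)$ for the spread sizes. The key observations are that each spread is a constant dimension code of minimum distance $2k'\geq 4$ (resp.\ $2(k-k')\geq 4$), and that in the matched regime $d=2k'=2(k-k')$ the spread bound from Subsection~\ref{subsec_packing} gives $a=A_q(n',d;k')$ and $b=A_q(n-n',d;k-k')$. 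By the symmetry of the statement under swapping the two Grassmannians, I may assume without loss of generality that $\gaussm{n'}{k'}{q}\cdot A_q(n-n',d;k-k')=l_A\cdot a\cdot b$ is the smaller of the two terms, which in the matched regime is equivalent to $l_A\leq l_B$. Then I would set $l=l_A$, $\mathcal{A}_i=P_A^i$ for each $i$ (so $\mathcal{A}=\G{n'}{k'}{q}$), and pick $\mathcal{B}_1,\dots,\mathcal{B}_{l_A}$ as any $l_A$ spreads drawn from $\mathcal{P}_B$; pairwise disjointness is automatic.

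The verification of Lemma~\ref{lemma_dist_coset_construction} then decomposes cleanly. For two codewords sharing an index $i$, a difference in the $A$- or $B$-part already yields $d_S\geq 4$ on that side because both parts lie inside a single spread; if those parts coincide then the codewords must differ in $F$, and the MRD choice of $\overline{F}$ coming from Theorem~\ref{thm_MRD_size} with rank distance $d/2=2$ handles this. For codewords with distinct indices $i\neq j$ the disjointness of the chosen spreads forces $A\neq A'$ and $B\neq B'$, so condition~(\ref{ie_combined_distance_condition}) combines $d_S\geq 2$ on each side into $d_S\geq 4$. A final count $\Lambda=\sum_{i=1}^{l_A}a\cdot b=l_A\cdot a\cdot b=\gaussm{n'}{k'}{q}\cdot b$ reproduces the claimed upper bound.

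The delicate step, and the one I expect will require most care, is the matching when the spread sizes differ from $A_q$: for parallelism-list entries with $k'>d/2$ (such as case~(4) with $(q,n,k)=(2,6,3)$) the spread size strictly undercuts $A_q(n',4;k')$, so the plain spread-by-spread pairing no longer saturates the upper bound. My reading is therefore that the statement is intended for the matched regime $k'=k-k'=d/2$ where the spread bound is tight, which is precisely where the $k=2$ entries of the list in Subsection~\ref{subsec_packing} supply the ``corresponding parameters''; otherwise one would have to replace each $\mathcal{B}_i$ by a maximum-size cdc and supplement the above with an independent argument producing $l_A$ pairwise-disjoint such codes inside $\G{n-n'}{k-k'}{q}$.
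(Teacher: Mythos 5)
The paper states this corollary without any proof; the intended argument is exactly the spread-by-spread pairing you describe, which is carried out concretely in Subsection~\ref{subsec_8_4_4} (where $n'=n-n'=4$, $k'=k-k'=2$, $l=7$, $\left|\mathcal{A}_i\right|=\left|\mathcal{B}_i\right|=5$ gives $\Lambda=175=\gaussm{4}{2}{2}\cdot A_2(4,4;2)$) and behind Theorem~\ref{thm_coset_parallelism}. Your $d=2$ case, your distance verification via Lemma~\ref{lemma_dist_coset_construction} (same index: distance $2k'\ge 4$ inside a spread or rank distance $2$ from the MRD choice of $\overline{F}$; distinct indices: disjointness forces $A\ne A'$ and $B\ne B'$), and your count $\Lambda=l_A\cdot a\cdot b$ all match what the authors have in mind. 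Your closing caveat is not a defect of your proof but a genuine imprecision in the corollary itself: the pairing saturates the bound of Corollary~\ref{cor_upper_bound_Lambda} only when the spread size equals $A_q(n',d;k')$, i.e., when $k'=d/2$ (and likewise $k-k'=d/2$). For the listed parallelism with $(q,n,k)=(2,6,3)$ and $d=4$ the spread size is $9$ while $A_2(6,4;3)=77$, so attaining the bound would require packing $\G{6}{3}{2}$ by pairwise disjoint $(6,77,4;3)_2$ codes --- something the paper nowhere establishes (Subsection~\ref{subsec_i_10_6_4} studies the opposite problem of decomposing a single such code). So your proof is the paper's proof, restricted --- correctly --- to the matched regime where the claim is actually justified.
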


The dependency between the cardinalities of the $\mathcal{A}_i$ and
$\mathcal{B}_i$ in optimal solutions of (\ref{eq_cardinality_coset_construction})
is already decoupled to some \highlight{extent}, but we can even do more.

\begin{lemma}
  \label{lemma_min_dist_union}
  For a code obtained from the construction of Lemma~\ref{lemma_coset_construction}
  with $d:=D_S\Big(\mathcal{C}\left(\left(\mathcal{A}_i\right)_i,\left(\mathcal{B}_i\right)_i,\overline{F}\right)\Big)$,
  length $l$, and parameters $q,n,k,n',k'$, there exists an integer $d'$ such that
  $D_S(\mathcal{A})\ge d'$ and $D_S(\mathcal{B})\ge d-d'$, where $\mathcal{A}=\cup_i\mathcal{A}_i$
  and $\mathcal{B}=\cup\mathcal{B}_i$.
\end{lemma}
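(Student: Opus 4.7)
The plan is to prove the equivalent statement $D_S(\mathcal{A}) + D_S(\mathcal{B}) \ge d$. Indeed, taking $d' = D_S(\mathcal{A})$ makes the first inequality trivial, while the second, $D_S(\mathcal{B}) \ge d - D_S(\mathcal{A})$, is exactly this sum bound. The condition that such an integer $d'$ exists in the interval $[d - D_S(\mathcal{B}),\, D_S(\mathcal{A})]$ is non-empty iff $D_S(\mathcal{A}) + D_S(\mathcal{B}) \ge d$.

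The key technical tool is a block decomposition of the subspace distance, implicit in the proof of Lemma~\ref{lemma_dist_coset_construction}: for any two codewords
\[
W = \tau^{-1}\begin{pmatrix}A & \varphi_B(F)\\ 0 & B\end{pmatrix},\qquad W' = \tau^{-1}\begin{pmatrix}A' & \varphi_{B'}(F')\\ 0 & B'\end{pmatrix}
\]
of the coset construction, one has
\[
d_S(W, W') \ge d_S(\tau^{-1}(A), \tau^{-1}(A')) + d_S(\tau^{-1}(B), \tau^{-1}(B')).
\]
I would establish this by letting $\pi\colon \mathbb{F}_q^{n}\to \mathbb{F}_q^{n'}$ be the projection to the first $n'$ coordinates and applying rank--nullity to $U + U'$, where $U,U'$ are the row spaces of $W,W'$. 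Then $\pi(U + U') = \tau^{-1}(A) + \tau^{-1}(A')$, while $\ker(\pi\vert_{U+U'})$ contains $\{0\}\oplus(\tau^{-1}(B) + \tau^{-1}(B'))$; combining gives the stated bound after translating dimensions into subspace distances via $d_S = 2\dim(\cdot + \cdot) - \dim - \dim$.

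I would then pick a pair $A_0, A_1 \in \mathcal{A}$ with $A_0 \in \mathcal{A}_{i_0}$, $A_1 \in \mathcal{A}_{i_1}$ realizing $D_S(\mathcal{A})$, and for arbitrary $B_0 \in \mathcal{B}_{i_0}$, $B_1 \in \mathcal{B}_{i_1}$ (nonempty by hypothesis) form the codewords $W_0, W_1 \in \mathcal{C}$ with those $\mathcal{A}$-parts, $\mathcal{B}$-parts, and a common $F \in \overline{F}$. The minimum-distance condition gives $d \le d_S(W_0, W_1)$, while the block inequality gives $d_S(W_0, W_1) \ge D_S(\mathcal{A}) + d_S(\tau^{-1}(B_0), \tau^{-1}(B_1))$. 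Combining with a parallel argument using a minimizing pair of $\mathcal{B}$, and exploiting that $d_S(\tau^{-1}(B_0), \tau^{-1}(B_1)) \ge D_S(\mathcal{B})$ by definition of the minimum distance, yields the target inequality $D_S(\mathcal{A}) + D_S(\mathcal{B}) \ge d$.

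The main obstacle is the index alignment: the minimizing pair of $\mathcal{A}$ lives in indices $(i_0, i_1)$, while the minimizing pair of $\mathcal{B}$ may live in different indices $(j_0, j_1)$. Handling this requires a case analysis on whether the minimum distance of $\mathcal{C}$ is attained by a pair differing in both blocks or only in one; the latter cases reduce quickly via Lemma~\ref{lemma_cdc_part}, which forces $D_S(\mathcal{A}_i) \ge d$ whenever $|\mathcal{A}_i| \ge 2$ (and analogously for $\mathcal{B}$), making the claim immediate by choosing $d' = d$ or $d' = 0$. The cross-index case is where the block inequality is applied substantively.
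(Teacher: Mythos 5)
Your reduction of the statement to $D_S(\mathcal{A})+D_S(\mathcal{B})\ge d$ is correct, and your projection/rank--nullity derivation of $d_S(W,W')\ge d_S(\tau^{-1}(A),\tau^{-1}(A'))+d_S(\tau^{-1}(B),\tau^{-1}(B'))$ is a clean way to recover the appendix inequality. But the step where you ``combine'' is logically invalid: you produce two \emph{lower} bounds on the same quantity, namely $d\le d_S(W_0,W_1)$ and $d_S(W_0,W_1)\ge D_S(\mathcal{A})+d_S(\tau^{-1}(B_0),\tau^{-1}(B_1))$, and no relation between $d$ and $D_S(\mathcal{A})+D_S(\mathcal{B})$ follows from two bounds pointing the same way. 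What the argument needs is the \emph{reverse} inequality for a suitably chosen pair of codewords, i.e.\ $d\le d_S(W_0,W_1)\le d_S(\tau^{-1}(A_0),\tau^{-1}(A_1))+d_S(\tau^{-1}(B_0),\tau^{-1}(B_1))$. This is exactly what the paper's proof arranges: it normalizes $\overline{F}$ so that $\mathbf{0}\in\overline{F}$ and takes $F=F'=\mathbf{0}$, whereupon the stacked generator matrix is block-diagonal up to a row permutation and the appendix inequality becomes an \emph{equality}. Your ``common $F\in\overline{F}$'' does not suffice here, since $\varphi_B(F)\neq\varphi_{B'}(F)$ when $B$ and $B'$ have different pivot positions; only $F=F'=\mathbf{0}$ forces the equality.

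The index-alignment obstacle you flag at the end is the second genuine gap, and the case analysis you sketch does not close it. Lemma~\ref{lemma_cdc_part} only disposes of the case where a minimizing pair of $\mathcal{A}$ (or of $\mathcal{B}$) lies inside a single part. In the remaining case the minimizing pair of $\mathcal{A}$ sits over indices $\{i_0,i_1\}$ and that of $\mathcal{B}$ over $\{j_0,j_1\}$, and if these index sets differ, forming codewords over $\{i_0,i_1\}$ only controls $d_S$ of \emph{some} pair of $\mathcal{B}$-elements taken from $\mathcal{B}_{i_0},\mathcal{B}_{i_1}$, which is $\ge D_S(\mathcal{B})$ --- again the unusable direction. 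No case analysis can repair this, because the claim genuinely fails there: take $n=8$, $k=4$, $n'=4$, $k'=2$, $l=3$, singleton parts $\mathcal{A}_i=\{U_i\}$, $\mathcal{B}_i=\{V_i\}$, $\overline{F}=\{\mathbf{0}\}$, with planes in $\mathbb{F}_q^4$ satisfying $d_S(U_1,U_2)=2$, $d_S(U_1,U_3)=d_S(U_2,U_3)=4$ and $d_S(V_2,V_3)=2$, $d_S(V_1,V_2)=d_S(V_1,V_3)=4$. The three codewords have pairwise distances $6,8,6$, so $d=6$, while $D_S(\mathcal{A})=D_S(\mathcal{B})=2$ and no admissible $d'$ exists. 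For what it is worth, the paper's own one-line proof tacitly assumes the two minimizing pairs sit over the same pair of indices and is therefore exposed to the same objection; but your write-up, having explicitly named the obstacle, cannot then assert that it ``reduces quickly.''
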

\begin{proof}
  Let $U,U'\in\mathcal{A}$ with $d_S(U,U')=D_S(\mathcal{A})=:d'$ and
  $V,V'\in\mathcal{B}$ with $d_S(V,V')=D_S(\mathcal{B})=:d''$.
  \highlight{W.l.o.g. we can assume that $\overline{F}$ contains the zero matrix,
  since the rank distance is invariant with respect to translations.}
  Choosing $F=F'=\mathbf{0}$ we can conclude $d''\ge d-d'$ from
  Inequality~(\ref{ie_combined_distance_condition}).
\end{proof}

\highlight{
In later applications we will commonly assume $2\le d'\le d-2$, since the other values lead
to trivial cases where either $|\mathcal{A}|=1$ or $|\mathcal{B}|=1$.
}

\begin{lemma}
  \label{lemma_permutation_sizes}
  For a code obtained from the construction of Lemma~\ref{lemma_coset_construction}
  with $d:=D_S\Big(\mathcal{C}\left(\left(\mathcal{A}_i\right)_i,\left(\mathcal{B}_i\right)_i,\overline{F}\right)\Big)$,
  length $l$, and parameters $q,n,k,n',k'$, then for each permutation $\highlight{\sigma}:\{1,\dots,l\}\rightarrow\{1,\dots,l\}$
  we have $D_S\Big(\mathcal{C}\left(\left(\mathcal{A}_i\right)_i,\left(\mathcal{B}_{\highlight{\sigma}(i)}\right)_i,\overline{F}\right)\Big)=d$.
\end{lemma}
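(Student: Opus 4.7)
The plan is to prove $D_S(\mathcal{C}') = d$, where $\mathcal{C}' := \mathcal{C}\left(\left(\mathcal{A}_i\right)_i,\left(\mathcal{B}_{\sigma(i)}\right)_i,\overline{F}\right)$ is the permuted code and $\mathcal{C}$ the original, by establishing the two inequalities separately. The main content is $D_S(\mathcal{C}') \ge d$; the reverse inequality then follows by a symmetry argument with $\sigma^{-1}$, since $\mathcal{C}$ is itself obtained from $\mathcal{C}'$ by a permutation of the $\mathcal{B}$-family.

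For $D_S(\mathcal{C}') \ge d$, I would take two distinct codewords of $\mathcal{C}'$ arising from blocks $(A_s,B_s,F_s)$ with $A_s\in\mathcal{A}_{i_s}$, $B_s\in\mathcal{B}_{\sigma(i_s)}$, $F_s\in\overline{F}$ for $s\in\{1,2\}$, and verify one of the sufficient conditions of Lemma~\ref{lemma_dist_coset_construction}. If $F_1\ne F_2$, I would fix any $A\in\mathcal{A}_j$, $B\in\mathcal{B}_j$ from some non-empty slot in the original. The pair $(A,B,F_1),(A,B,F_2)$ lies in $\mathcal{C}$ with identical pivot vectors, so by Lemma~\ref{lemma_same_ef} their subspace distance equals $2 d_R(F_1,F_2)$; since $D_S(\mathcal{C})=d$, this forces $d_R(F_1,F_2)\ge d/2$.

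If $F_1 = F_2$, I would split on equality of the $\mathcal{A}$- and $\mathcal{B}$-parts. If $A_1 = A_2$, then by disjointness of the $\mathcal{A}_j$'s the two codewords lie in the same slot $i_1=i_2$, so $B_1,B_2 \in \mathcal{B}_{\sigma(i_1)}$; Lemma~\ref{lemma_cdc_part} applied to $\mathcal{C}$ yields $d_S(B_1,B_2)\ge d$. The case $B_1 = B_2$ is symmetric via disjointness of the $\mathcal{B}_j$'s. In the remaining subcase $A_1\ne A_2$ and $B_1\ne B_2$, I would apply Lemma~\ref{lemma_min_dist_union} to $\mathcal{C}$, obtaining $d'$ with $D_S(\mathcal{A})\ge d'$ and $D_S(\mathcal{B})\ge d-d'$, whence $d_S(A_1,A_2)+d_S(B_1,B_2)\ge d'+(d-d')=d$. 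In every subcase Lemma~\ref{lemma_dist_coset_construction} concludes.

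For the reverse inequality $D_S(\mathcal{C}') \le d$, I repeat the case analysis above with $\mathcal{C}'$ playing the role of the original and $\sigma^{-1}$ the permutation; Lemma~\ref{lemma_cdc_part} and Lemma~\ref{lemma_min_dist_union} apply verbatim to $\mathcal{C}'$, and the resulting code $\mathcal{C}\left(\left(\mathcal{A}_i\right)_i,\left(\mathcal{B}_{\sigma^{-1}(\sigma(i))}\right)_i,\overline{F}\right)$ equals $\mathcal{C}$. This yields $D_S(\mathcal{C}) \ge D_S(\mathcal{C}')$, and combining with $D_S(\mathcal{C})=d$ gives the claim. The main obstacle is the subcase $A_1 = A_2$ (and its mirror), where the slot-pairing of the permuted code matters; this is the one place the argument genuinely uses the structural assumption that the $\mathcal{A}_i$'s (resp.\ $\mathcal{B}_j$'s) are pairwise disjoint, while every other subcase is either slot-independent or reduces to the single-slot analysis of Lemma~\ref{lemma_cdc_part}.
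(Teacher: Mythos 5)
Your proof is correct and rests on the same pivot as the paper's, namely Lemma~\ref{lemma_dist_coset_construction}; the paper's entire proof is the single line ``Apply Lemma~\ref{lemma_dist_coset_construction}.'' What you add --- extracting $d_R(F_1,F_2)\ge d/2$ from same-pivot codewords via Lemma~\ref{lemma_same_ef}, invoking Lemma~\ref{lemma_cdc_part} and Lemma~\ref{lemma_min_dist_union} to recover condition~(\ref{ie_combined_distance_condition}) in the slot-dependent subcases, and the $\sigma^{-1}$ symmetry for the equality $D_S=d$ rather than just $\ge d$ --- is exactly the bridging material the one-liner suppresses, since Lemma~\ref{lemma_dist_coset_construction} only states sufficient conditions and these must first be derived from $D_S(\mathcal{C})=d$. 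So: same route, but yours is the complete version.
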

\begin{proof}
  Apply Lemma~\ref{lemma_dist_coset_construction}.
\end{proof}

The question which permutation $\highlight{\sigma}$ of Lemma~\ref{lemma_permutation_sizes} maximizes
the crucial parameter $\Lambda$ can be answered easily.

\begin{lemma}
  \label{lemma_best_permutation}
  Let $a_1\ge\dots\ge a_l$ and $b_1\ge \dots\ge b_l$ positive integers. For each
  permutation $\highlight{\sigma}:\{1,\dots,l\}\rightarrow \{1,\dots,l\}$, we have
  \[
    \sum_{i=1}^{l} a_i\cdot b_i\ge \sum_{i=1}^{l} a_i\cdot b_{\highlight{\sigma}(i)}.
  \]
\end{lemma}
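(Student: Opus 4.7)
The statement is the classical rearrangement inequality, so I would use the standard exchange (bubble-sort) argument rather than invent something new.

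The plan is to show that any permutation $\sigma$ different from the identity can be strictly improved, or at least not worsened, by an adjacent transposition that reduces the number of inversions. Iterating this process, which must terminate since the number of inversions is a finite nonnegative integer, one reaches the identity permutation, which is therefore the maximizer.

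Concretely, I would first note: if $\sigma$ is not the identity, then there exist indices $i<j$ with $\sigma(i) > \sigma(j)$ (otherwise $\sigma$ would be weakly increasing, hence the identity). Let $\sigma'$ be the permutation obtained from $\sigma$ by swapping the values at $i$ and $j$, i.e., $\sigma'(i) = \sigma(j)$, $\sigma'(j) = \sigma(i)$, and $\sigma'(m) = \sigma(m)$ for $m \notin \{i,j\}$. Then
\[
  \sum_{m=1}^{l} a_m b_{\sigma'(m)} - \sum_{m=1}^{l} a_m b_{\sigma(m)}
  = a_i b_{\sigma(j)} + a_j b_{\sigma(i)} - a_i b_{\sigma(i)} - a_j b_{\sigma(j)}
  = (a_i - a_j)\bigl(b_{\sigma(j)} - b_{\sigma(i)}\bigr).
\]
Since $i<j$ implies $a_i \ge a_j$, and $\sigma(i) > \sigma(j)$ implies $b_{\sigma(j)} \ge b_{\sigma(i)}$, both factors are nonnegative, so the difference is $\ge 0$.

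Hence each such swap weakly increases the value of the sum while strictly decreasing the number of inversions of $\sigma$. After finitely many swaps we arrive at the identity permutation, which yields $\sum_{i=1}^{l} a_i b_i$, and thus this value dominates $\sum_{i=1}^{l} a_i b_{\sigma(i)}$ for every $\sigma$. There is no real obstacle here; the only thing to be careful about is making the termination argument explicit via the inversion count, which is why I frame the swap as one that reduces inversions rather than an arbitrary transposition.
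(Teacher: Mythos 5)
Your proof is correct and follows essentially the same route as the paper: the paper's entire proof is the single swap inequality $(ab+a'b')-(ab'+a'b)=(a-a')(b-b')<0$ for $a>a'$, $b<b'$, leaving the iteration implicit. You simply make the bubble-sort termination argument explicit via the inversion count, which is a reasonable elaboration of the same idea.
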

\begin{proof}
  For integers $a>a'$ and $b<b'$ we have
  \[
    (ab+a'b')-(ab'+a'b)=(a-a')\cdot(b-b')<0.
  \]
\end{proof}

Having these ingredients at hand we can generalize and improve the upper bound
from Corollary~\ref{cor_upper_bound_Lambda} \highlight{using} the analytical solution
of another optimization problem.
\begin{lemma}
  \label{lemma_optimal_ilp_solution}
  Let $\alpha$, $\beta$, $\overline{\alpha}$, $\overline{\beta}$, and $l$ be positive integers
  with $\alpha,\beta\ge l$.
  An optimal solution of the non-linear integer programming problem
  \begin{eqnarray*}
    \max \sum_{i=1}^{l} a_i\cdot b_i&&\\
    \sum_{i=1}^{l} a_i\le \alpha
    &&\sum_{i=1}^{l} b_i\le \beta\\
    1\le a_i\le \overline{\alpha}\quad\forall 1\le i\le l
    && 1\le b_i\le \overline{\beta}\quad\forall 1\le i\le l\\
    a_i,b_i\in\highlight{\mathbb{N}_{> 0}}\quad\forall 1\le i\le l
  \end{eqnarray*}
  is given by
  \begin{enumerate}
    \item[(1)] $a_i^\star=\overline{\alpha}$, $b_i^\star=\overline{\beta}$ for all
               $1\le i\le l$ if $\overline{\alpha}\cdot l\le \alpha$ and
               $\overline{\beta}\cdot l\le \beta$;
    \item[(2)] $a_i^\star=\overline{\alpha}$, $b_i^\star=1+\min\{\overline{\beta}-1,\max\{0,\beta-l-(i-1)\cdot\left(\overline{\beta}-1\right)\}\highlight{\}}$
               for all $1\le i\le l$ if $\overline{\alpha}\cdot l\le \alpha$ and
               $\overline{\beta}\cdot l> \beta$;
    \item[(3)] $a_i^\star=1+\min\{\overline{\alpha}-1,\max\{0,\alpha-l-(i-1)\cdot\left(\overline{\alpha}-1\right)\}\highlight{\}}$,
               $b_i^\star=\overline{\beta}$ for all $1\le i\le l$ if $\overline{\alpha}\cdot l> \alpha$ and
               $\overline{\beta}\cdot l\le \beta$;
    \item[(4)] $a_i^\star=1+\min\{\overline{\alpha}-1,\max\{0,\alpha-l-(i-1)\cdot\left(\overline{\alpha}-1\right)\}\highlight{\}}$,
               $b_i^\star=1+\min\{\overline{\beta}-1,\max\{0,\beta-l-(i-1)\cdot\left(\overline{\beta}-1\right)\}\highlight{\}}$
               for all $1\le i\le l$ if $\overline{\alpha}\cdot l> \alpha$ and
               $\overline{\beta}\cdot l> \beta$.
  \end{enumerate}
\end{lemma}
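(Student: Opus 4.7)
I would prove this by characterizing the structure of any optimum via three exchange arguments, and then verifying that the stated closed-form solutions realize this structure. Throughout, write $A=\sum_i a_i$ and $B=\sum_i b_i$.

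First, at any optimum both budgets are tight: $A=\min\{\alpha,l\overline{\alpha}\}$ and $B=\min\{\beta,l\overline{\beta}\}$. Indeed, if $A<\alpha$, then $A<l\overline{\alpha}$, so some $a_j<\overline{\alpha}$; raising $a_j$ by $1$ preserves feasibility and changes the objective by $b_j\ge 1>0$. The $b$-side is symmetric. Second, by Lemma~\ref{lemma_best_permutation}, once the multisets of values $\{a_i\}$ and $\{b_i\}$ are fixed, $\sum a_i b_i$ is maximized by pairing them in the same order, so we may assume $a_1\ge\cdots\ge a_l$ and $b_1\ge\cdots\ge b_l$.

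The third and key step is a concentration exchange. If indices $i<j$ satisfy $a_i<\overline{\alpha}$ and $a_j>1$, the perturbation $a_i\mapsto a_i+1$, $a_j\mapsto a_j-1$ preserves $A$, remains feasible, and changes the objective by $b_i-b_j\ge 0$; if monotonicity of $(a_i)$ is temporarily broken, a re-sort combined with re-pairing as in Step~2 restores it without decreasing the objective. Iterating leaves at most one $a$-coordinate strictly between $1$ and $\overline{\alpha}$, with the larger values occurring first: $a_1=\cdots=a_t=\overline{\alpha}$, $a_{t+1}\in\{1,\ldots,\overline{\alpha}\}$, $a_{t+2}=\cdots=a_l=1$ for some $t$ maximal with $t\overline{\alpha}\le A$. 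The identical argument forces the same shape on $(b_i)$.

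Combining the three steps uniquely determines the optimum in each of the four cases. Case~(1) is the trivial one where neither cap binds, so $a_i^\star=\overline{\alpha}$ and $b_i^\star=\overline{\beta}$ throughout. In the remaining cases, the excess $\alpha-l\ge 0$ (respectively $\beta-l\ge 0$; non-negativity is exactly the hypothesis $\alpha,\beta\ge l$) must be distributed greedily over the first coordinates in chunks of $\overline{\alpha}-1$ (respectively $\overline{\beta}-1$); rewriting this greedy rule gives precisely $1+\min\{\overline{\alpha}-1,\max\{0,\alpha-l-(i-1)(\overline{\alpha}-1)\}\}$, and likewise for $b_i^\star$. I expect the main difficulty to be not conceptual but notational: verifying that the composed $\min/\max$ expression correctly accounts for the initial $t$ coordinates at the cap, the single intermediate remainder, and the trailing ones, with no off-by-one slip, and checking the four case distinctions $\overline{\alpha} l \lessgtr \alpha$, $\overline{\beta} l \lessgtr \beta$ in parallel.
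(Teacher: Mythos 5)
Your proposal is correct and its engine is the same as the paper's: sort both sequences by the rearrangement inequality (the paper's Lemma~\ref{lemma_best_permutation}) and then use the unit-transfer exchange $a_f\mapsto a_f+1$, $a_r\mapsto a_r-1$, which changes the objective by $b_f-b_r\ge 0$ --- this is literally the argument the paper uses to settle its case~(4). The organization differs, though. The paper first passes to the continuous relaxation, argues that an optimum may be assumed constant ($\tilde a_i\equiv\tilde a$, $\tilde b_i\equiv\tilde b$) and uses ``either $l\tilde a=\alpha$ or $\tilde a=\overline{\alpha}$'' to generate the four cases, handling cases (2)--(3) by the observation that once $a_i\equiv\overline{\alpha}$ the objective depends only on $\sum_i b_i$; the exchange argument is invoked only for case~(4). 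You instead stay in the integers throughout: tight budgets ($\sum a_i=\min\{\alpha,l\overline{\alpha}\}$, likewise for $b$) plus the concentration exchange applied uniformly to both sequences yield the staircase shape in all four cases at once. This buys uniformity and sidesteps the paper's averaging step, whose displayed inequality $a'b'+a''b''\ge 2\cdot\frac{a'+a''}{2}\cdot\frac{b'+b''}{2}$ actually points in the direction of concentration rather than equalization, so your route is arguably the more robust one. Two small things to tidy when writing it out: the first step should read ``if $\sum a_i<\min\{\alpha,l\overline{\alpha}\}$ then some $a_j<\overline{\alpha}$'' (as written, ``$A<\alpha$ implies $A<l\overline{\alpha}$'' is not an implication), and your cap-prefix length is not ``$t$ maximal with $t\overline{\alpha}\le A$'' but $t$ maximal with $t(\overline{\alpha}-1)\le \sum_i a_i-l$ --- exactly the off-by-one you anticipated, and the $\min/\max$ formula in the statement already encodes the correct version.
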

\begin{proof}
  W.l.o.g.\ we can additionally assume $a_1\ge\dots\ge a_l$ and $b_1\ge\dots\ge b_l$ without
  decreasing the maximal target value of the optimization problem. Let us allow $a_i,b_i\in\mathbb{R}$
  for a moment, i.e., we consider the standard relaxation, and denote a corresponding optimal solution
  by $\tilde{a}_i$, $\tilde{b}_i\highlight{\in\mathbb{R}_{\ge 1}}$.

  For \highlight{non-negative} real numbers $a'\ge a''$ and $b'\ge b''$ we have
  \[
    \left(a'b'+a''b''\right)-2\cdot\frac{a'+a''}{2}\cdot\frac{b'+b''}{2}
    =\frac{(a'-a'')\cdot(b'-b'')}{2}\ge 0,
  \]
  so that we can assume $\tilde{a}_i=\tilde{a}_j=:\tilde{a}$ and $\tilde{b}_i=\tilde{b}_j=:\tilde{b}$,
  for all $1\le i,j\le l$, w.l.o.g.

  Either we have $l\tilde{a}=\alpha$ or $\tilde{a}=\overline{\alpha}$, since otherwise we could slightly
  increase $\tilde{a}$ and improve the target value. The same reasoning applies to $\tilde{b}$.

  If $\tilde{a}=\overline{\alpha}$ and $\tilde{b}=\overline{\beta}$, then we are in case (1). Next we consider
  the case where $\tilde{a}=\overline{\alpha}$ and $\tilde{b}<\overline{\beta}$ so that $\tilde{b}=\beta/l$.
  Since $\sum_{i=1}^l b_i\overline{\alpha}= \overline{\alpha}\cdot \sum_{i=1}^l b_i$ it suffices to determine
  integers $1\le b_i^\star\le\overline{\beta}$ with $\sum_{i=1}^{l} b_i^\star=\beta$. This is done in the
  formula of case (2). The underlying idea is the following: Start with $b_i^\star=1$ for all $1\le i\le l$;
  observe $\beta\ge l$. Then fill up the $b_i^\star$ with increasing indices up to $\overline{\beta}$ as
  long as the sum does not violate $\beta$. \highlight{Observe that every (integer) vector $(b_i)$ with
  $\sum_{i=1}^l b_i=\beta$ gives the same target value.} Case (3) describes the symmetric situation. It remains to assume
  $\overline{\alpha}\cdot l> \alpha$ and $\overline{\beta}\cdot l> \beta$. Let $\hat{a}_i$, $\hat{b}_i$ be
  an optimal solution of our initial optimization problem where we assume $\hat{a}_1\ge \dots\ge \hat{a}_l$ and
  $\hat{b}_1\ge \dots\ge \hat{b}_l$. Let further $f$ be the smallest index such that $\hat{a}_f<\overline{\alpha}$
  and $r$ be the largest index such that $\hat{a}_r>1$. If either $r$ does not exist or $f=r$ \highlight{($f$ exists due
  to $\bar{\alpha}\cdot l>\alpha$)}, then the
  solution $\hat{a}_i$ has the shape described in case~(4). But, for $f<r$ we could improve the target value by
  \[
    \left(\hat{a}_f+1\right)\cdot \hat{b}_f+\left(\hat{a}_r-1\right)\cdot \hat{b}_r
    -\hat{a}_f\cdot\hat{b}_f -\hat{a}_r\cdot\hat{b}_r=\hat{b}_f-\hat{b}_r\ge 0,
  \]
  so that \highlight{such a case could never produce an optimal value and so our solution must have the
  shape described in case~(4)}. The same reasoning applies for the $\hat{b}_i$.
\end{proof}
\begin{lemma}
  \label{lemma_upper_bound_Lambda}
  Using the notation from Lemma~\ref{lemma_coset_construction} and
  Equation~(\ref{eq_cardinality_coset_construction}) we have
  \begin{eqnarray*}
    \Lambda&\le&\max_{d'\in 2\mathbb{Z}\,:\,0<d'<d}
    \,\,\,\,\max_{1\le l\le \min\left\{A_q(n',d';k'),A_q(n-n',d-d';k-k')\right\}}\,\,\,\,
    \sum_{i=1}^l a_i\cdot b_i,
  \end{eqnarray*}
  where the $a_i$, $b_i$ are given by Lemma~\ref{lemma_optimal_ilp_solution} for
  \begin{eqnarray*}
    \alpha&=& A_q(n',d';k'),\\
    \beta&=& A_q(n-n',d-d';k-k'),\\
    \overline{\alpha}&=&A_q(n',d;k'),\\
    \overline{\beta}&=&A_q(n-n',d;k-k').
  \end{eqnarray*}
\end{lemma}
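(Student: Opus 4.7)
The plan is to exhibit $\Lambda$ as a feasible objective value of the integer program solved in Lemma~\ref{lemma_optimal_ilp_solution} for some admissible choice of the auxiliary parameters $d'$ and $l$, and then take the maximum of the program's optimum over these parameters.

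First I would set $a_i := |\mathcal{A}_i|$ and $b_i := |\mathcal{B}_i|$, so $a_i,b_i \in \mathbb{N}_{>0}$ and $\Lambda = \sum_{i=1}^{l} a_i b_i$. By Lemma~\ref{lemma_cdc_part} each $\mathcal{A}_i$ is a constant dimension code in $\G{n'}{k'}{q}$ of minimum subspace distance at least $d$, so $a_i \le A_q(n',d;k') = \overline{\alpha}$; analogously $b_i \le A_q(n-n',d;k-k') = \overline{\beta}$. These give the per-coordinate upper bounds in the program.

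Next I would invoke Lemma~\ref{lemma_min_dist_union} to obtain an even integer $d'$ with $D_S(\mathcal{A}) \ge d'$ and $D_S(\mathcal{B}) \ge d - d'$, where $\mathcal{A} = \dot\cup_i \mathcal{A}_i$ and $\mathcal{B} = \dot\cup_i \mathcal{B}_i$. This delivers the sum bounds $\sum_i a_i = |\mathcal{A}| \le A_q(n',d';k') = \alpha$ and $\sum_i b_i = |\mathcal{B}| \le A_q(n-n',d-d';k-k') = \beta$; since $a_i,b_i \ge 1$ we also obtain $l \le \min\{\alpha,\beta\}$, which secures the hypothesis $\alpha,\beta \ge l$ required by Lemma~\ref{lemma_optimal_ilp_solution} and matches the stated range of $l$. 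In the non-degenerate regime $|\mathcal{A}|,|\mathcal{B}| \ge 2$ one may pick $d'$ as the exact minimum distance of $\mathcal{A}$, yielding an even integer in $\{2,4,\ldots,d-2\}$; the degenerate cases $|\mathcal{A}|=1$ and $|\mathcal{B}|=1$ produce $\Lambda \le \overline{\beta}$ and $\Lambda \le \overline{\alpha}$, respectively, both of which are already subsumed by the $l=1$ instance of the claimed maximum (since for any admissible $d'$ one has $\overline{\alpha} \le \alpha$ and $\overline{\beta} \le \beta$, so that case~(1) of Lemma~\ref{lemma_optimal_ilp_solution} gives $a_1^\star b_1^\star = \overline{\alpha}\cdot\overline{\beta}$).

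Finally I would apply Lemma~\ref{lemma_permutation_sizes} to reorder the $\mathcal{B}_i$ at will and then Lemma~\ref{lemma_best_permutation} to arrange $a_1 \ge \ldots \ge a_l$ and $b_1 \ge \ldots \ge b_l$ simultaneously, which does not decrease $\sum_i a_i b_i$. The tuple $((a_i),(b_i))$ is then a feasible point of the program in Lemma~\ref{lemma_optimal_ilp_solution} with parameters $(\alpha,\beta,\overline{\alpha},\overline{\beta},l)$, whence $\Lambda \le \sum_i a_i^\star b_i^\star$ for the explicit optimum given there. Taking the maximum of this optimum over $d' \in 2\mathbb{Z}$ with $0<d'<d$ and over $1 \le l \le \min\{A_q(n',d';k'),A_q(n-n',d-d';k-k')\}$ yields the claimed bound. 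The main obstacle is the administrative bookkeeping, namely checking that $\alpha,\beta \ge l$ holds automatically and disposing of the singleton edge cases; once these are settled, the proof is a clean chaining of Lemmas~\ref{lemma_cdc_part}, \ref{lemma_min_dist_union}, \ref{lemma_permutation_sizes}, \ref{lemma_best_permutation}, and \ref{lemma_optimal_ilp_solution}.
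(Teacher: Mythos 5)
Your proposal is correct and follows essentially the same route as the paper: bound the $|\mathcal{A}_i|,|\mathcal{B}_i|$ via Lemma~\ref{lemma_cdc_part}, bound the totals via Lemma~\ref{lemma_min_dist_union}, and feed the resulting feasible point into the optimization problem of Lemma~\ref{lemma_optimal_ilp_solution}. Your extra bookkeeping (verifying $\alpha,\beta\ge l$ and the singleton cases) is exactly what the paper compresses into ``it remains to check that we can apply Lemma~\ref{lemma_optimal_ilp_solution}.''
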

\begin{proof}
  From Lemma~\ref{lemma_min_dist_union} we conclude $\left|\mathcal{A}\right|\le A_q(n',d';k')$
  and $\left|\mathcal{B}\right|\le A_q(n-n',d-d';k-k')$. The possible values for the length $l$
  are part of the stated optimization formulation. For each index $1\le i\le l$ we have
  $\left|\mathcal{A}_i\right|\le A_q(n',d;k')$ and $\left|\mathcal{B}_i\right|\le A_q(n-n',d;k-k')$
  due to Lemma~\ref{lemma_cdc_part}.
  It remains to check that we can apply Lemma~\ref{lemma_optimal_ilp_solution}.
\end{proof}

Fixing the parameter $d'$ from Lemma~\ref{lemma_min_dist_union} one can state a lower bound on the
maximal value of $\Lambda$ \highlight{in terms of the sizes of lifted MRD codes (cf.\ Theorem~\ref{thm_lifted_MRD})}.

\begin{lemma}
  \label{lemma_primal bound_Lambda}
  Let $d'\in 2\mathbb{Z}$ with $2\le d'\le d-2$, then we have
  \[
    \Lambda \ge M(q,k',n',d) \cdot M(q,k-k',n-n',d) \cdot l
  \]
  with
  \[
    l=\min\left\{\frac{M(q,k',n',d')}{M(q,k',n',d)},\frac{M(q,k-k',n-n',d-d')}{M(q,k-k',n-n',d)}\right\}.
  \]
  for, with respect to Lemma~\ref{lemma_coset_construction}, feasible parameters $q,n,k,n',k',d$.
\end{lemma}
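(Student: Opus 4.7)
The plan is to construct the sets $\mathcal{A}_i$ and $\mathcal{B}_i$ explicitly from nested lifted MRD codes, so that their product sum realises the claimed lower bound on $\Lambda$. First I would fix a linear Gabidulin MRD code $\mathcal{R}\subseteq\mathbb{F}_q^{k'\times(n'-k')}$ of rank distance $d'/2$ together with a linear Gabidulin subcode $\mathcal{R}^*\subseteq\mathcal{R}$ of rank distance $d/2$; this nesting is possible because Gabidulin codes are $\mathbb{F}_q$-linear and form a chain under decreasing rank distance. Lifting both by prepending $I_{k'}$ produces constant dimension codes $\mathcal{A}^*\subseteq\mathcal{A}\subseteq\G{n'}{k'}{q}$ of sizes $M(q,k',n',d)$ and $M(q,k',n',d')$ respectively, and the additive coset decomposition of $\mathcal{R}$ modulo $\mathcal{R}^*$ transfers into a partition $\mathcal{A}=\dot\cup_{i=1}^{l_1}\mathcal{A}_i$ with $l_1=M(q,k',n',d')/M(q,k',n',d)$ blocks of uniform size $M(q,k',n',d)$. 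I would repeat the construction on the $(n-n',k-k')$-block with rank distances $d/2$ and $(d-d')/2$, obtaining a partition into $l_2=M(q,k-k',n-n',d-d')/M(q,k-k',n-n',d)$ blocks $\mathcal{B}_i$ of size $M(q,k-k',n-n',d)$.

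Setting $l:=\min\{l_1,l_2\}$ and keeping only the first $l$ blocks on each side, I would then verify condition~(\ref{ie_combined_distance_condition}) of Lemma~\ref{lemma_dist_coset_construction}. For two codewords sharing an index~$i$, if their $A$-parts differ they do so by a lift of a nonzero element of $\mathcal{R}^*$, which by Lemma~\ref{lemma_same_ef} forces $d_S(\tau^{-1}(A),\tau^{-1}(A'))\ge d$, and the same dichotomy applies on the $B$-side, so whenever $(A,B)\ne(A',B')$ at least one summand is already $\ge d$. For two codewords with distinct indices $i\ne j$, the $A$-representatives lie in distinct cosets of $\mathcal{R}^*$ inside $\mathcal{R}$ and differ by a matrix of rank $\ge d'/2$, giving $d_S\ge d'$; analogously $d_S\ge d-d'$ on the $B$-side, and the sum reaches $d$. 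The residual case of codewords agreeing on $A$ and $B$ but differing in $F$ is covered by taking $\overline F$ to be any MRD code of rank distance $d/2$, invoking the second clause of Lemma~\ref{lemma_dist_coset_construction}.

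With the distance condition in place, substituting the sizes of the blocks into~(\ref{eq_cardinality_coset_construction}) directly yields $\Lambda=\sum_{i=1}^l |\mathcal{A}_i|\cdot|\mathcal{B}_i|=l\cdot M(q,k',n',d)\cdot M(q,k-k',n-n',d)$, matching the stated lower bound. The main obstacle is really only the cross-index step, where one needs to upgrade the naive bound $d_S\ge 0$ for representatives from different cosets to the sharp bounds $d_S\ge d'$ and $d_S\ge d-d'$; the $\mathbb{F}_q$-linearity of Gabidulin codes is exactly what guarantees that the difference of coset representatives still lies in the ambient MRD code and hence has the required minimum rank. Degenerate parameter ranges where some $M(\cdot)$ collapses to $1$ pose no difficulty, since then the partition on that side is trivial and the identity for $\Lambda$ remains valid verbatim.
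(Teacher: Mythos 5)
Your proposal is correct and takes essentially the same route as the paper's proof, which also realises the $\mathcal{A}_i$ and $\mathcal{B}_i$ as the cosets of a nested linear MRD subcode of rank distance $d/2$ inside linear MRD codes of rank distances $d'/2$ and $(d-d')/2$, pairs them off, and invokes translation-invariance of the rank distance. Your write-up is in fact somewhat more explicit than the paper's about why the nesting exists (the Gabidulin chain) and about the cross-index distance check, but the underlying construction is identical.
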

\begin{proof}
  Similar to the proof of \cite[Lemma 5]{etzion2013codes}, we consider $\mathcal{A}$ as a \highlight{linear} MRD code with parameters
  $k'\times n'$ with distance $d'$ and $\mathcal{B}$ as a \highlight{linear} MRD \highlight{code} with parameters
  $(k-k')\times (n-n')$ with distance $d-d'$. Let $\mathcal{S}_{\mathcal{A}}$ be a \highlight{linear} MRD code with parameters
  $k'\times n'$ with distance $d>d'$ and $\mathcal{S}_{\mathcal{B}}$ be a \highlight{linear} MRD code with parameters
  $(k-k')\times (n-n')$ with distance $d>d-d'$. We choose the $\mathcal{A}_i$ as the cosets of $\mathcal{S}_{\mathcal{A}}$ in
  $\mathcal{A}$ and $\mathcal{B}_i$ as the cosets of $\mathcal{S}_{\mathcal{B}}$ in
  $\mathcal{B}$. For $\mathcal{S}_{\mathcal{A}}$ there are exactly $\frac{M(q,k',n',d')}{M(q,k',n',d)}$ cosets and for
  $\mathcal{S}_{\mathcal{B}}$ there are exactly $\frac{M(q,k-k',n-n',d-d')}{M(q,k-k',n-n',d)}$ cosets.
  Since $d_R(A+C,B+C)=d_R(A,B)$ for all suitable matrices $A,B,C\in\mathbb{F}_q^{s\times t}$, we have
  $D_S(\mathcal{A}_i), D_S(\mathcal{B}_i)\ge d$ for all $1\le i\le l$.
\end{proof}
Combining a lifted MRD code with a code constructed from Lemma~\ref{lemma_primal bound_Lambda} yields a
$(9,1032,6;4)_2$ code, which improves \highlight{on} the previously \highlight{best-known} codes, see  Subsection~\ref{subsec_i_9_6_4}.

We can formulate the following greedy-type algorithm to construct sequences $\mathcal{A}_i$ and $\mathcal{B}_i$
that yield a {\lq\lq}reasonable{\rq\rq} lower bound on $\Lambda$.

\addtocounter{algorithm}{\value{theorem}}
\stepcounter{theorem}

\begin{algorithmic}
\captionof{algorithm}{$\,$}\label{algorithm_greedy}
\State \highlightgreen{$\mathcal{R}_{\mathcal{A}}\gets \G {n'}{k'}{q}$}
\State $i\gets 0$
\While {$\mathcal{R}_{\mathcal{A}} \neq \emptyset$}
\State $i\gets i+1$
\State
\begin{varwidth}[t]{\linewidth}
select constant dimension code $\mathcal{A}_i$ of maximum\par\hskip\algorithmicindent cardinality in $\mathcal{R}_{\mathcal{A}}$ with $D_S(\mathcal{A}_i) \ge d$
\end{varwidth}
\State $\mathcal{R}_{\mathcal{A}} \gets \mathcal{R}_{\mathcal{A}} \setminus \{V \mid D_S(\mathcal{A}_i\cup \{V\}) \le d'-1\}$
\EndWhile
\State $l_{\mathcal{A}}\gets i$
\State $\mathcal{R}_{\mathcal{B}}\gets \G {n-n'}{k-k'}{q}$
\State $i\gets 0$
\While {$\mathcal{R}_{\mathcal{B}} \neq \emptyset$}
\State $i\gets i+1$
\State
\begin{varwidth}[t]{\linewidth}
select constant dimension code $\mathcal{B}_i$ of maximum\par\hskip\algorithmicindent cardinality in $\mathcal{R}_{\mathcal{B}}$ with $D_S(\mathcal{B}_i) \ge d$
\end{varwidth}
\State $\mathcal{R}_{\mathcal{B}} \gets \mathcal{R}_{\mathcal{B}} \setminus \{V \mid D_S(\mathcal{B}_i\cup \{V\}) \le d-d'-1\}$
\EndWhile
\State $l_{\mathcal{B}}\gets i$
\State $l\gets \min\left\{l_{\mathcal{A}},l_{\mathcal{B}}\right\}$
\end{algorithmic}

Unfortunately, this algorithm is not capable of determining the optimal $\Lambda$ in general.
If we use
\begin{eqnarray*}
  E:= \{\text{all constant dimension codes in } \G{\tilde{n}}{\tilde{k}}{q} \text{ with subspace distance } d \}
\end{eqnarray*}
as ground set
and $I:=\{\text{disjoint subsets of } E \}$ as independent sets, then this \highlight{does not form a} \emph{matroid} and hence \highlight{a} greedy \highlight{algorithm}
will not yield an optimal solution in general, see e.g.\ \cite{edmonds1971matroids}. To be more precise, the independent set
exchange property fails: Use for example
$U \ne V \in \G{\tilde{n}}{\tilde{k}}{q}$ with $d_S(U,V)\ge d$, $A:=\{\{U\}, \{V\}\} \in I$ and $B:=\{\{U,V\}\} \in I$. Although
$A$ is larger than $B$ we cannot add an element of $A$ to $B$ without losing the independence.

\subsection{Decomposing constant dimension codes}
\label{subsec_decomposition_cdc}

Due to Lemma~\ref{lemma_min_dist_union} we can construct the necessary parts of the coset construction
of Lemma~\ref{lemma_coset_construction} starting from constant dimension codes $\mathcal{A}$ and
$\mathcal{B}$ with $D_S(\mathcal{A})\ge d'$ and $D_S(\mathcal{B})\ge d-d'$. The aim is to partition
the codewords of $\mathcal{A}$ into subcodes $\mathcal{A}_i$ for $1\le i\le l_{\mathcal{A}}$ in such a
way that $D_S\left(\mathcal{A}_i\right)\ge d$. Simultaneously, we aim to partition the codewords
of $\mathcal{B}$ into subcodes $\mathcal{B}_i$ for $1\le i\le l_{\mathcal{B}}$ in such a
way that $D_S\left(\mathcal{B}_i\right)\ge d$. Setting the length $l$ of the coset construction to
$l:=\min\left\{l_{\mathcal{A}},l_{\mathcal{B}}\right\}$, we observe that trying to maximize
the cardinalities $\left|\mathcal{A}_i\right|$ or $\left|\mathcal{B}_i\right|$ for $i>l$ has no benefit,
so that we may simply complete a given packing by singletons. Or, in other words, we directly start from
packings within $\mathcal{A}$ and $\mathcal{B}$.

However, the design of suitable $\mathcal{A}_i$ is not that obvious since the $\Lambda$-part of the
target function~(\ref{eq_cardinality_coset_construction}) comprises a non-linear integer optimization problem.
Ignoring almost all of the geometric restrictions from $\PG{n}{q}$, we are able to exactly solve the mentioned
optimization problem in Lemma~\ref{lemma_optimal_ilp_solution}. In general this gives us an upper bound only.
To obtain tighter bounds one has to go a bit more into the details. In Lemma~\ref{lemma_upper_bound_Lambda}
we have only used the implication $\left|\mathcal{A}_i\right|\le A_q(n',d;k')$ from
$D_S\left(\mathcal{A}_i\right)\ge d$, which is valid for all
$\cup_{i=1}^{l}\mathcal{A}_i\subseteq\mathcal{A}\subseteq\G{n'}{k'}{q}$. For a
given $\mathcal{A}$ we may be able to determine tighter bounds on the cardinalities of the $\mathcal{A}_i$s.
Since the only change in the setting is the exclusion of the possible codewords in $\G{n'}{k'}{q}
\backslash\mathcal{A}$ this subproblem can be formulated as an independent set problem and be solved using
several algorithmic approaches, see e.g.\ \cite{kohnert2008construction}. We will present an explicit example
of this technique in Subsection~\ref{subsec_i_10_6_4}.

Having candidates for the $\mathcal{A}_i$ at hand\highlight{, it still} remains to select a subset of the candidates that are pairwise
disjoint. This subproblem can also be formulated as a (restricted) independent set problem of a, possibly large, graph
$G=(V,E)$. To this end, let $\kappa$ be a suitable upper bound on the cardinalities of the $\left|\mathcal{A}_i\right|$
and $S_i$ be the set of subsets of $\mathcal{A}$ of cardinality $i$ having a subspace distance of at least $d$. Setting
$S=\cup_{1\le i\le\kappa} S_i$ one can consider the optimization problem
\begin{align}
\max & \sum_{s \in S} \left|s\right|\cdot x_s \label{ILP_decompose}\\
& \sum_{s \in S} x_s = l \nonumber\\
&x_a + x_b \le 1 & \forall a \ne b \in S : a \cap b \ne \emptyset \nonumber\\
&x_s \in \{0,1\} & \forall s \in S \nonumber
\end{align}
for a given number $l$ of parts of the desired packing. Notwithstanding that the target function of ILP
formulation~(\ref{ILP_decompose}) completely ignores the correlation with the sizes of the items of the
second packing on $\Lambda$, it can be used to determine the exact value of $\Lambda$ in special cases,
see Subsection~\ref{subsec_i_10_6_4}. Setting \highlight{the vertex set of our graph $G$ to} $V=S$ and taking edges $e=\{s_1,s_2\}\in E$ iff $s_1\cap s_2\neq \emptyset$,
this corresponds to a vertex-weighted independent set problem with an additional restriction on
the number of chosen vertices. The algorithmic approaches described \highlight{in} \cite{kohnert2008construction} can be
adopted easily for \highlight{these} extra requirements.

Since the two subproblems from this subsection on their own even might be too hard, we may apply heuristic approaches
only. The very successful approach of prescribing automorphisms can also be applied here. Here the prescribed subgroup
of automorphisms has to be a subgroup of the automorphism group of $\mathcal{A}$ which typically is much smaller
than $\operatorname{GL}(n,q)$. However, {\lq\lq}good{\rq\rq} codes often have non-trivial automorphism groups.

\section{Examples}
\label{sec_examples}

In this section we describe the details of the coset construction for some parameters where we were
able to \highlightgreen{attain or} improve the best known constructions.
\highlightgreen{See Table~\ref{tab:improvedcodesizes} for an overview of code sizes where the Echelon-Ferrers construction from~\cite{etzion2009error} uses sizes of Echelon-Ferrers diagrams from~\cite{MR3480069} which was developed later.}

\begin{table}[H]
\begin{tabular}{p{3cm}p{3.5cm}l}
parameters & old largest known code & coset construction \\
\hline
$(8, \cdot, 4;4)_q$ & $q^{12}+\gaussm{4}{2}{q}(q^2+1)q^2+1$, cf.~\cite{etzion2013codes} & $q^{12}+\gaussm{4}{2}{q}(q^2+1)q^2+1$ \\
$(3k-3, \cdot, 2k-2;k)_q$ for $k \ge 4$ & \makecell[tl]{$q^{4k-6}+q^{k-1}+1$,\\cf.~\cite{MR3480069,etzion2009error}} & $q^{4k-6}+q^{k-1}+1$ \\
$(10,\cdot,6;4)_2$ & $4167$, cf.~\cite{etzion2009error} & $4173$ \\
\end{tabular}
\caption{\highlightgreen{Improved or attained code sizes by the coset construction.}}
\label{tab:improvedcodesizes}
\end{table}

\subsection{\texorpdfstring{$n=8$, $d=4$, $k=4$, and $q=2$ revisited}{n=8, d=4, k=4, and q=2 revisited}}
\label{subsec_8_4_4}
We apply the coset construction with $n'=4$, $k'=2$, $d'=2$ and use a parallelism in $\G{4}{2}{2}$ for
the $\mathcal{A}_i$ and $\mathcal{B}_i$. Here we have $l=7$ and $\left|\mathcal{A}_i\right|=\left|\mathcal{B}_i\right|=5$
for all $1\le i\le 7$. Thus, $\Lambda=7\cdot 5\cdot 5=175$.
\highlight{Since $\overline{F}$ is an MRD code of shape $2 \times 2$ and rank distance 2, its cardinality is 4, hence the}
corresponding code obtained from the coset construction has cardinality $700$. \highlight{The rank distance between the
two pivot vectors $v_1=(1,1,1,1,0,0,0,0)$ and $v_2=(0,0,0,0,1,1,1,1)$, as well as $v_i$ and
any pivot vector of any codeword of the coset construction for $i=1,2$ is 4, cf. Lemma~\ref{lemma_multilevel_coset_hamming}. So the Echelon-Ferrers construction applied
to $v_1$ and $v_2$ and combined with the coset construction yields a feasible subspace code for our parameters, i.e.,
$A_2(8,4;4)\ge 4096+700+1=4797$.} This is
\highlight{Construction~III} in \cite{etzion2013codes}. \highlight{A different technique was applied in \cite[Theorem~4.1]{Cossidente2016} to find a code of this size.}
Here, the MRD bound from Theorem~\ref{thm_MRD_upper_bound} is attained.
Recently, an $(8,4801,4;4)_2$ code has been found by a heuristic computer search \cite{heuristic_cliquer}.

As already observed in \cite{etzion2013codes}, the crucial ingredient for the feasibility of the above construction
is the existence of a parallelism in $\G{4}{2}{q}$. Performing the above cardinality computations for arbitrary
$q$ we obtain $A_q(8,4;4)\ge q^{12}+\gaussm{4}{2}{q}(q^2+1)q^2+1$, which also attains the MRD bound from
Theorem~\ref{thm_MRD_upper_bound}.

The authors of \cite{etzion2013codes} have remarked that they believe that their construction from their \highlight{Construction~III}
can be generalized to further parameters assuming the existence of a corresponding parallelism. This is indeed the case.

\begin{theorem}
  \label{thm_coset_parallelism}
  If $\mathcal{P}_1$ is a parallelism in $\G{n'}{k'}{q}$ and $\mathcal{P}_2$ a parallelism in $\G{n-n'}{k-k'}{q}$,
  then we can choose $\mathcal{A}=\mathcal{P}_1$, $\mathcal{B}=\mathcal{P}_2$, and $d=4$ in the coset construction.
  The corresponding code $\mathcal{C}$ attains the upper bound of Corollary~\ref{cor_upper_bound_Lambda}. If additionally
  $k-k'\ge 2$ and $n'-k'\ge 2$, then $\mathcal{C}$ is compatible with the lifted MRD code having pivot vector
  $(\underset{k}{\underbrace{1,\dots,1}},0,\dots,0)$.
\end{theorem}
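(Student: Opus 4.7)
The plan is to apply the coset construction of Lemma~\ref{lemma_coset_construction} with parts $\mathcal{A}_i$ the spreads of $\mathcal{P}_1$ and $\mathcal{B}_i$ the spreads of $\mathcal{P}_2$ (paired so that both parallelisms contribute the same number $l$ of parts), and with $\overline{F}\subseteq\mathbb{F}_q^{k'\times(n-n'-k+k')}$ a linear MRD code of rank distance $d/2=2$. Three things then need verification: (a) the resulting code $\mathcal{C}$ has subspace distance at least $d=4$; (b) its $\Lambda$-part attains the bound of Corollary~\ref{cor_upper_bound_Lambda}; and (c) under the side conditions $k-k'\ge 2$ and $n'-k'\ge 2$, it is compatible with the lifted MRD code having pivot vector $v^\star=(\underset{k}{\underbrace{1,\dots,1}},0,\dots,0)$.

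For (a) I invoke Lemma~\ref{lemma_dist_coset_construction} on two distinct codewords coming from triples $(A,B,F)$ and $(A',B',F')$. If $A=A'$ and $B=B'$ then $F\ne F'$, and the MRD property of $\overline{F}$ yields $d_R(F,F')\ge 2$. Otherwise I show the sum $d_S(\tau^{-1}(A),\tau^{-1}(A'))+d_S(\tau^{-1}(B),\tau^{-1}(B'))\ge 4$ by a short case analysis: if both pairs differ, each summand is at least $2$ because distinct subspaces of the same dimension have positive subspace distance; if $A=A'$, then $B,B'$ lie in the same spread $\mathcal{B}_i$ and $d_S\ge 2(k-k')\ge 4$; symmetrically for $B=B'$. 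This is exactly the point at which the implicit assumptions $k'\ge 2$ and $k-k'\ge 2$ (needed so that each spread has minimum subspace distance at least $d$) are used.

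For (b), each spread in $\mathcal{P}_1$ partitions the point set of $\mathbb{F}_q^{n'}$, so $|\mathcal{A}_i|=(q^{n'}-1)/(q^{k'}-1)$ and $\sum_i|\mathcal{A}_i|=\gaussm{n'}{k'}{q}$, and analogously for $\mathcal{B}$. Hence
\[
\Lambda=\sum_i|\mathcal{A}_i|\cdot|\mathcal{B}_i|=\gaussm{n'}{k'}{q}\cdot|\mathcal{B}_1|=\gaussm{n-n'}{k-k'}{q}\cdot|\mathcal{A}_1|.
\]
Since a spread is already a maximum CDC at its natural distance $2k'$ (respectively $2(k-k')$) in the parameter ranges where parallelisms exist, both expressions match the two candidates in the minimum of Corollary~\ref{cor_upper_bound_Lambda}, so the minimum is attained. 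For (c), I apply Lemma~\ref{lemma_multilevel_coset_hamming} with $\tilde{k}=k$ and $X$ any lifted MRD codeword with pivot $v^\star$: then $s=\sum_{i=1}^{n'}(v^\star)_i=\min(n',k)$, and the lemma's lower bound reads $2|s-k'|$, which is $2(k-k')\ge 4$ when $n'\ge k$ and $2(n'-k')\ge 4$ when $n'<k$ --- precisely the two stated hypotheses. Distinct lifted MRD codewords are at subspace distance $\ge d$ by Theorem~\ref{thm_lifted_MRD} combined with Lemma~\ref{lemma_same_ef}, so the union of $\mathcal{C}$ with the lifted MRD code remains a valid $(n,\cdot,d;k)_q$ code.

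The main obstacle is the parallelism matching: the coset construction pairs the $\mathcal{A}_i$ and $\mathcal{B}_i$ through a common index set, so one needs the numbers of spreads in $\mathcal{P}_1$ and $\mathcal{P}_2$ to coincide. In the parallelism-admitting parameter range of Subsection~\ref{subsec_packing} this is exactly what happens for the concrete examples treated later (e.g.\ $q=2$, $n=8$, $n'=4$, $k'=2$, $k=4$, where both sides yield $l=7$ spreads of size $5$); without such a matching one would be forced to use $l=\min\{l_1,l_2\}$ and forfeit some of the $\Lambda$-mass, so the attainment claim tacitly relies on this numerical coincidence.
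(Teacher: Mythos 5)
The paper states Theorem~\ref{thm_coset_parallelism} without a separate proof, leaving it to the preceding lemmas and the worked $(8,4;4)_q$ example; your write-up assembles exactly those ingredients, so you are on the intended route. Parts (a) and (c) are correct: the case analysis for the distance (using disjointness of the spread classes to force $B,B'$ into a common spread when $A=A'$) is the right argument, and you are right to flag that it silently needs $k'\ge 2$ and $k-k'\ge 2$ --- for $k'=1$ the construction really does produce pairs of codewords at subspace distance $2$. Likewise, the computation $s=\min(n',k)$ in Lemma~\ref{lemma_multilevel_coset_hamming} and its match with the two stated side conditions is correct.

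Two points on part (b). First, a genuine soft spot: Corollary~\ref{cor_upper_bound_Lambda} bounds $\Lambda$ via $A_q(n-n',4;k-k')$ and $A_q(n',4;k')$, whereas a spread only realizes $A_q(\cdot,2(k-k');\cdot)$. These coincide when $k'=k-k'=2$ (a line spread is a maximum partial spread for $d=4$), but not in general: for the listed parallelism in $\G{6}{3}{2}$ the spread size is $9$ while $A_2(6,4;3)=77$, so the bound of Corollary~\ref{cor_upper_bound_Lambda} is \emph{not} attained there. Your sentence ``a spread is already a maximum CDC at its natural distance'' does not bridge this gap; the theorem as stated shares the problem, but a complete proof would have to restrict to (or at least single out) the case where the spread size equals $A_q(\cdot,4;\cdot)$. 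Second, your closing worry about needing $l_1=l_2$ is unfounded. Writing $\gaussm{n'}{k'}{q}=l_1 s_1$ and $\gaussm{n-n'}{k-k'}{q}=l_2 s_2$ with $s_1,s_2$ the spread sizes, the two candidates in the minimum of Corollary~\ref{cor_upper_bound_Lambda} become $l_1 s_1 s_2$ and $l_2 s_1 s_2$ once the identification above holds, so taking $l=\min\{l_1,l_2\}$ parts attains the minimum exactly, with no numerical coincidence required; only your displayed double equality for $\Lambda$ presupposes $l_1=l_2$, and it is the \emph{smaller} of the two products that both the construction and the corollary deliver.
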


\subsection{\texorpdfstring{$n=9$, $d=6$, $k=4$, and general field sizes $q$}{n=9, d=6, k=4, and general field sizes q}}
\label{subsec_i_9_6_4}

\highlightgreen{Since the combination of} the MRD code $\mathcal{C}_1$ with pivot vector $v=(1,1,1,1,0,0,0,0,0)$ and cardinality $1024$ with
the code $\mathcal{C}_2$ obtained from the explicit construction of Lemma~\ref{lemma_primal bound_Lambda} of
cardinality $8$
\highlightgreen{yields a $(9,1032,6;4)_2$ constant dimension code whose cardinality is one less than}
the MRD bound from Theorem~\ref{thm_MRD_upper_bound},
we were motivated to look for a coset construction yielding a larger addendum than $8$.

\begin{theorem}
  $A_q(9,6;4)\ge q^{10}+q^3+1$.
  \label{thm_improve_9_6_4}
\end{theorem}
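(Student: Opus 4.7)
The plan is to achieve the bound by augmenting a lifted MRD code with the image of a carefully chosen coset construction. First, I would take $\mathcal{C}_1$ to be the lifted MRD code in $\G{9}{4}{q}$ with pivot vector $(1,1,1,1,0,0,0,0,0)$; by Theorem~\ref{thm_lifted_MRD} this is a constant dimension code of cardinality $q^{10}$ with minimum subspace distance $6$. The choice of this specific pivot vector is deliberate: it is exactly what makes Lemma~\ref{lemma_multilevel_coset_hamming} force the Hamming distance $6$ between the pivot of any $X\in\mathcal{C}_1$ and the pivot of any codeword produced by the coset construction below.

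Next I would apply Lemma~\ref{lemma_coset_construction} with $n'=4$ and $k'=1$, so that $n-n'=5$ and $k-k'=3$. Then $F$ lies in $\mathbb{F}_q^{1\times 2}$ and the rank distance requirement $d/2=3$ forces $\overline{F}=\{0\}$. Setting $l=q^3+1$, for each $1\le i\le l$ I would pick $\mathcal{A}_i=\{U_i\}$ and $\mathcal{B}_i=\{V_i\}$, where $U_1,\dots,U_{q^3+1}\in\G{4}{1}{q}$ are distinct (available since $\gaussm{4}{1}{q}=q^3+q^2+q+1\ge q^3+1$) and $V_1,\dots,V_{q^3+1}\in\G{5}{3}{q}$ are the codewords of an optimal $(5,q^3+1,4;3)_q$ constant dimension code witnessing the well-known equality $A_q(5,4;3)=q^3+1$. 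The resulting code $\mathcal{C}_2$ has cardinality $|\overline{F}|\cdot\Lambda=1\cdot(q^3+1)$.

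Two distance verifications remain. For any two codewords of $\mathcal{C}_2$ coming from distinct indices $i\ne j$, Inequality~(\ref{ie_combined_distance_condition}) of Lemma~\ref{lemma_dist_coset_construction} holds because $d_S(U_i,U_j)=2$ and $d_S(V_i,V_j)\ge 4$ sum to at least $6$. For an arbitrary $X\in\mathcal{C}_1$ (so $\tilde{k}=4$ and $s=\sum_{i=1}^{4}p(X)_i=4$) and any $W\in\mathcal{C}_2$, Lemma~\ref{lemma_multilevel_coset_hamming} yields
\[
    d_S(X,W)\ge |s-k'|+|\tilde{k}-s-k+k'|=|4-1|+|4-4-4+1|=6.
\]
Combining, $\mathcal{C}_1\cup\mathcal{C}_2$ is a constant dimension code in $\G{9}{4}{q}$ of cardinality $q^{10}+q^3+1$ and minimum subspace distance at least $6$.

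The main non-routine ingredient is invoking the availability of an optimal $(5,q^3+1,4;3)_q$ code for arbitrary prime powers $q$. This is a classical fact: the equality $A_q(5,4;3)=q^3+1$ is standard and the extremal code is realized by adjoining a single $3$-dimensional subspace of suitable position to a lifted MRD code in $\G{5}{3}{q}$ of size $q^3$. Once this ingredient is granted, the remaining steps are direct applications of the structural lemmas of Sections~\ref{sec_preliminaries} and~\ref{sec_coset_construction}, and the resulting code matches the MRD upper bound of Theorem~\ref{thm_MRD_upper_bound}.
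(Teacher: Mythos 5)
Your proposal is correct and follows essentially the same route as the paper: the coset construction with $n'=4$, $k'=1$, $d'=2$, singleton parts $\mathcal{B}_i$ drawn from an optimal $(5,q^3+1,4;3)_q$ code (via $A_q(5,4;3)=q^3+1$), combined with the lifted MRD code of size $q^{10}$. You merely make explicit the distance verifications (via Lemma~\ref{lemma_dist_coset_construction} and Lemma~\ref{lemma_multilevel_coset_hamming}) and the fact that $\overline{F}$ must be trivial, which the paper leaves implicit.
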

\begin{proof}
  We choose $n'=4$, $k'=1$, and $d'=2$ in the coset construction. For the choice of $\mathcal{A}$ and
  $\mathcal{B}$ we observe $A_q(4,2;1)=q^3+q^2+q+1$ and $A_q(5,4;3)=A_q(5,4;2)=q^3+1$, see e.g.\
  \cite{beutelspacher1975partial}. Choose $\mathcal{A}$ and $\mathcal{B}$ as arbitrary codes attaining
  the mentioned upper bounds. Choosing a trivial packing of $\mathcal{B}$ into singletons yields a code $\mathcal{C}$
  of cardinality $q^3+1$. Adding the lifted MRD code of size $q^{10}$ gives the stated upper bound.
\end{proof}

We remark that the codes from Theorem~\ref{thm_improve_9_6_4} \highlight{meet} the MRD bound from Theorem~\ref{thm_MRD_upper_bound}.
The underlying construction can be generalized even more.

\begin{theorem}
  \label{thm_improve_series}
  For each $k\ge 4$ and arbitrary $q$ we have
  \[
    A_q(3k-3,2k-2;k)\ge q^{4k-6}+\highlightgreen{q^{k-1}}
    +1.
  \]
\end{theorem}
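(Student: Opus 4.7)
The plan is to apply the coset construction from Lemma~\ref{lemma_coset_construction} with parameters $n'=k$, $k'=1$, $d'=2$ and then adjoin a lifted MRD code, in direct generalization of the proof of Theorem~\ref{thm_improve_9_6_4}. One checks immediately that the parameter constraints of Lemma~\ref{lemma_coset_construction} are satisfied for $k\ge 4$.

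The key combinatorial input is a code $\mathcal{B}\subseteq\G{2k-3}{k-1}{q}$ with $D_S(\mathcal{B})\ge d-d'=2k-4$ of maximum size. Since $2k-4=2\min\{k-1,(2k-3)-(k-1)\}$ is the largest possible subspace distance in $\G{2k-3}{k-1}{q}$, the standard duality $A_q(n,d;k)=A_q(n,d;n-k)$ reduces $|\mathcal{B}|$ to the partial spread number $A_q(2k-3,2k-4;k-2)$; this is the Beutelspacher case $n=2m+1$, $m=k-2$, with remainder $r=1$, giving $|\mathcal{B}|=q^{k-1}+1$.

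With such a $\mathcal{B}$ in hand, I would set $l:=q^{k-1}+1$, partition $\mathcal{B}$ into singletons $\mathcal{B}_i$, and take $l$ pairwise distinct singletons $\mathcal{A}_i\subseteq\G{k}{1}{q}$; this is possible because $\gaussm{k}{1}{q}\ge q^{k-1}+1$ for $k\ge 2$. I would set $\overline{F}:=\{\mathbf{0}\}$: since $F\in\mathbb{F}_q^{1\times(k-2)}$ cannot have rank $d/2=k-1\ge 3$, Theorem~\ref{thm_MRD_size} forces $|\overline{F}|=1$. The minimum distance is verified via Lemma~\ref{lemma_dist_coset_construction}: for indices $i\ne i'$ one has $d_S(A_i,A_{i'})=2$ (distinct $1$-spaces) and $d_S(B_i,B_{i'})\ge 2k-4$, summing to at least $d$; within a single index there is only one codeword, as $|\overline{F}|=1$. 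Hence the coset code has cardinality exactly $l=q^{k-1}+1$.

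Finally I would adjoin the lifted MRD code with pivot vector $(1^k,0^{2k-3})$, of cardinality $q^{(2k-3)\cdot 2}=q^{4k-6}$. Compatibility follows from Lemma~\ref{lemma_multilevel_coset_hamming}: with $\tilde{k}=k$ and $s=k$ the quantity $|s-k'|+|\tilde{k}-s-k+k'|$ evaluates to $2(k-1)=d$. Summing the two cardinalities yields the claimed $q^{4k-6}+q^{k-1}+1$. The only step beyond routine parameter checking is identifying the partial-spread cardinality $q^{k-1}+1$ for $\mathcal{B}$, which is imported from the classical literature; the remaining arguments mirror the $k=4$ case essentially verbatim.
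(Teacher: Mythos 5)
Your proposal is correct and follows essentially the same route as the paper's proof: the same parameters $n'=k$, $k'=1$, $d'=2$ in the coset construction, the same identification of $|\mathcal{B}|=A_q(2k-3,2k-4;k-1)=A_q(2k-3,2k-4;k-2)=q^{k-1}+1$ via the complementary-code duality and Beutelspacher's partial spread bound, the same singleton packing, and the same adjoined lifted MRD code of size $q^{4k-6}$. Your write-up is in fact slightly more explicit than the paper's (e.g., noting $\overline{F}=\{\mathbf{0}\}$ and verifying compatibility via Lemma~\ref{lemma_multilevel_coset_hamming}), but there is no substantive difference.
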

\begin{proof}
  We choose $n'=k$, $k'=1$, and $d'=2$ in the coset construction. For the choice of $\mathcal{A}$ and
  $\mathcal{B}$ we observe $A_q(k,2;1)=\gaussm{k}{1}{q}$ and
\highlightgreen{
  \begin{align*}
    &A_q(2k-3,2k-4;k-1)= A_q(2k-3,2k-4;k-2)\\
    &\overset{\text{\cite{beutelspacher1975partial}}}{=} \frac{q^{2k-3}-q}{q^{k-2}-1}-q+1=q^{k-1}+1<\highlight{\gaussm{k}{1}{q},}
  \end{align*}
}
  \highlight{where the first equality is true by considering the so-called complementary subspace code $C^{\perp}=\{U^{\perp} \mid U \in C\}$, cf. \cite{koetter2008coding}.}
   \highlight{Choose} $\mathcal{A}$ and $\mathcal{B}$ as arbitrary codes attaining
  the mentioned upper bounds. Choosing a trivial packing of $\mathcal{B}$ into singletons yields a code $\mathcal{C}$
  of cardinality \highlightgreen{$q^{k-1}+1$}. Adding a $(k\times (3k-3))$ lifted  MRD code gives the stated \highlight{lower} bound.
\end{proof}

We remark that the codes from Theorem~\ref{thm_improve_series} meet the MRD bound from Theorem~\ref{thm_MRD_upper_bound}.

\subsection{\texorpdfstring{$n=10$, $d=6$, $k=4$, and $q=2$}{n=10, d=6, k=4, and q=2}}
\label{subsec_i_10_6_4}
For the coset construction we choose $n'=4$ and $k'=1$. Since $\mathcal{A}\subseteq\G{4}{1}{2}$ we can only
have $D_S\left(\mathcal{A}_i\right)=2$, so that we must choose $d'=2$. Then, we can choose $\mathcal{A}=\G{4}{1}{2}$
and $\gaussm{4}{1}{2}=15$ singletons $\mathcal{A}_i$, which is obviously best possible. For $\mathcal{B}\subseteq
\G{6}{3}{2}$ we have the condition $D_S(\mathcal{B})\ge 4$. Reasonable candidates for $\mathcal{B}$ might be the
five isomorphism types of $(6,77,4;3)_2$ codes attaining the maximum cardinality $A_2(6,4;3)=77$, see \cite{hkk77}.
Using the first subproblem from Subsection~\ref{subsec_decomposition_cdc} we computationally obtain the upper bound
$\left|\mathcal{B}_i\right|\le 5=:\kappa$ for four out of the five isomorphism types. This information is enough
to conclude the upper bound $\Lambda(\mathcal{B})\le 15\cdot 5=75$. For the remaining isomorphism type,
i.e., the self-dual code having $168$ automorphisms which was labeled as {\lq\lq}type A{\rq\rq}, we
have $\left|\mathcal{B}_i\right|\le 7=:\kappa$. So, we solve the optimization problem~(\ref{ILP_decompose})
for $l=15$. The sizes of the requested sets $S_I$ are stated in Table~\ref{table_s_i}. The optimal target
value is $76$ and there exists a solution where the sizes of the elements in the packing are given by 4, 4, 4, 5,
5, 5, 5, 5, 5, 5, 5, 5, 5, 7, 7. Since in our situation we have $\left|\mathcal{A}_i\right|=1$ for all $i$, the
target function of (\ref{ILP_decompose}) coincides with the expression for $\Lambda$. Also the predefinition
of $l=15$ results in the maximum possible value, since we have $l\le 15$ from the $\mathcal{A}$-part and the
existence of a packing of $\mathcal{B}$ into $l'$ sets implies the existence of packings into $l\ge l'$ sets.
In general it is far from being obvious that we obtain the best possible codes from the coset construction
by choosing codes for $\mathcal{B}$ that have the maximal possible cardinality $A_q(n-n',d;k-k')$. However,
in our situation each choice for $\mathcal{B}$ different from the five considered isomorphism types of
$(6,77,4;3)_2$ codes has a cardinality of at most $76$, so that
$\sum_{i} \left|\mathcal{A}_i\right|\cdot \left|\mathcal{B}_i\right|\le 76$.

\begin{theorem}
  For $n=10$, $k=4$, $n'=6$, $k'=3$, $q=2$, and $d=6$, the maximum achievable $\Lambda$ of the coset construction
  is given by $76$.
\end{theorem}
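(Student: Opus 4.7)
The plan is to reduce the optimization to an enumeration over five specific isomorphism types of maximum $(6,77,4;3)_2$ codes. First, I would determine which values of $d'$ from Lemma~\ref{lemma_min_dist_union} are feasible. Since $\mathcal{B}\subseteq\G{4}{1}{2}$ has pairwise subspace distance exactly~$2$ between distinct points, the requirement $D_S(\mathcal{B})\ge d-d'$ forces $d-d'\le 2$, so with $d'$ even and $2\le d'\le d-2$ the only choice is $d'=4$. Consequently $|\mathcal{B}_i|\le A_2(4,6;1)=1$ and $l\le |\mathcal{B}|\le A_2(4,2;1)=\gaussm{4}{1}{2}=15$. Hence $\Lambda=\sum_i|\mathcal{A}_i|=|\mathcal{A}|$, and the task becomes: maximize $|\mathcal{A}|$ over all $(6,\cdot,4;3)_2$ codes $\mathcal{A}\subseteq \G{6}{3}{2}$ that admit a partition into at most~$15$ subcodes of subspace distance at least~$6$.

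Next, I would split by $|\mathcal{A}|$. If $|\mathcal{A}|\le 76$ then trivially $\Lambda\le 76$, so attention focuses on the extremal case $|\mathcal{A}|=A_2(6,4;3)=77$. These maximum codes fall into exactly five isomorphism types, as classified in~\cite{hkk77}. For each type I would compute, via the independent-set formulation described in Subsection~\ref{subsec_decomposition_cdc}, the maximum size $\kappa$ of a subcode of $\mathcal{A}$ with $D_S\ge 6$. For four of the five types one obtains $\kappa=5$, so $\Lambda\le 15\cdot 5=75$, ruling out the value~$76$ in these cases.

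For the one remaining isomorphism type (the self-dual one labeled ``type A'' with automorphism group of order~$168$) the analogous computation gives $\kappa=7$, so the naive bound $15\cdot 7=105$ is too weak. I would therefore invoke the ILP~(\ref{ILP_decompose}) with ground set $S=\bigcup_{1\le i\le 7}S_i$, whose cardinalities are listed in Table~\ref{table_s_i}, and $l=15$, and solve it exactly. Since $|\mathcal{A}_i|=1$ throughout, the objective of~(\ref{ILP_decompose}) coincides with $\Lambda$, and fixing $l=15$ is without loss because enlarging $l$ only adds singletons to the packing. The ILP yields the optimum value~$76$, realized by a packing with part sizes $(4,4,4,5,5,5,5,5,5,5,5,5,5,7,7)$, which simultaneously supplies the matching lower bound.

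The main obstacle is the exact solution of the ILP on type~A: the number of candidate subcodes in $S$ grows rapidly and the problem is in principle NP-hard. I would tame it by exploiting the action of the automorphism group of order~$168$ to reduce variables to orbit representatives, or by the algorithmic techniques of~\cite{kohnert2008construction}. Once this single ILP is resolved, combining the $\Lambda\le 76$ bounds from both the $|\mathcal{A}|\le 76$ case and the five extremal types, together with the explicit optimal packing on type~A, completes the proof.
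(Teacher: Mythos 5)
Your proposal is correct and follows essentially the same route as the paper: reduce to $\Lambda=\sum_i|\mathcal{A}_i|$ with $l\le 15$ singleton parts on the $\G{4}{1}{2}$ side, dispose of codes of size at most $76$ trivially, bound four of the five $(6,77,4;3)_2$ isomorphism types by $15\cdot 5=75$ via $\kappa=5$, and settle type~A by solving the ILP~(\ref{ILP_decompose}) exactly, obtaining the optimum $76$ together with the realizing packing. The only (immaterial) differences are that you swap the roles of $\mathcal{A}$ and $\mathcal{B}$ to match the theorem's parameterization $n'=6$, $k'=3$, and you add a symmetry-reduction suggestion for the ILP that the paper does not spell out.
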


\begin{table}[htp!]
\centering
\begin{tabular}{r|ccccccc}
$i=$&1&2&3&4&5&6&7 \\
\hline
$\left|S_i\right|=$&77&840&2240&1792&560&112&16\\[2mm]
\end{tabular}
\caption{Sizes of $S_i$ for $1\le i\le 7=\kappa$.}
\label{table_s_i}
\end{table}

For general field sizes $q$ we may choose $\mathcal{A}=\G{4}{1}{q}$ and $\gaussm{4}{1}{q}=q^3+q^2+q+1$ singletons $\mathcal{A}_i$.
For $\mathcal{B}$ one may choose a $(6,q^6+2q^2+2q+1,4;3)_q$ code, see \cite{hkk77}. Can one analytically describe
packings of $(6,q^6+2q^2+2q+1,4;3)_q$ codes into $q^3+q^2+q+1$ parts of large cardinality?

\begin{theorem}
 $A_2(10,6;4)\ge 4173$.
 \label{thm_improve_10_6_4}
\end{theorem}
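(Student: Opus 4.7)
The plan is to construct a $(10,4173,6;4)_2$ code by combining three pieces and verifying pairwise distances with the earlier lemmas. First, I apply Lemma~\ref{lemma_coset_construction} with the parameters already developed in the preceding paragraphs, namely $q=2$, $n=10$, $k=4$, $n'=4$, $k'=1$, $d=6$, and $d'=2$. Take $\mathcal{A}=\G{4}{1}{2}$ decomposed into its $15$ one-element blocks $\mathcal{A}_i$ and let $\mathcal{B}$ be the self-dual type~A $(6,77,4;3)_2$ code decomposed into $15$ disjoint blocks $\mathcal{B}_1,\dots,\mathcal{B}_{15}$ with cardinalities $4,4,4,5,5,5,5,5,5,5,5,5,5,7,7$ as certified by the integer programming formulation~(\ref{ILP_decompose}) above. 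Since $\mathbb{F}_2^{1\times 3}$ admits no nontrivial rank-metric code of rank distance $\ge 3$, $\overline{F}=\{0\}$ is forced, and Lemma~\ref{lemma_dist_coset_construction} yields a constant-dimension code $\mathcal{C}_{\mathrm{coset}}$ of cardinality $\Lambda=76$ with minimum subspace distance at least~$6$.

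Second, I adjoin the lifted MRD code $\mathcal{C}_{\mathrm{MRD}}\subseteq\G{10}{4}{2}$ with pivot vector $(1,1,1,1,0,0,0,0,0,0)$; by Theorem~\ref{thm_lifted_MRD} it has size $M(2,4,10,6)=2^{12}=4096$. Compatibility with $\mathcal{C}_{\mathrm{coset}}$ is certified by Lemma~\ref{lemma_multilevel_coset_hamming}: every $X\in\mathcal{C}_{\mathrm{MRD}}$ has first-four pivot weight $s=4$, so
\[
d_S(X,W)\ge|s-k'|+|k-s-k+k'|=3+3=6
\]
for every $W\in\mathcal{C}_{\mathrm{coset}}$. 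Thus $\mathcal{C}_{\mathrm{MRD}}\cup\mathcal{C}_{\mathrm{coset}}$ is a $(10,4172,6;4)_2$ code.

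Third, I exhibit a single additional codeword $Y\in\G{10}{4}{2}$ with $d_S(Y,Z)\ge 6$ for every $Z\in\mathcal{C}_{\mathrm{MRD}}\cup\mathcal{C}_{\mathrm{coset}}$. For any candidate $Y$ whose pivot has at most one $1$ in positions $1$--$4$, compatibility with $\mathcal{C}_{\mathrm{MRD}}$ is immediate from Lemma~\ref{lemma_different_ef}; the delicate point is compatibility with the $76$ coset codewords. Once such a $Y$ is exhibited, the three pieces combine to a code of cardinality $4096+76+1=4173$, matching the MRD upper bound of Theorem~\ref{thm_MRD_upper_bound} for these parameters.

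The main obstacle is producing $Y$. For every weight-$4$ pivot $v\ne(1,1,1,1,0,\dots,0)$ one can find a coset pivot $(e_j,b)$ with $d_H(v,(e_j,b))<6$, so the Hamming-distance certificate of Lemma~\ref{lemma_different_ef} is by itself insufficient and one must verify $\dim(Y\cap W)\le 1$ directly for each of the $76$ coset codewords $W$. Exploiting the $168$-element automorphism group $\mathrm{GL}(3,2)$ of the type~A code substantially cuts down the number of orbits that have to be inspected, and carrying out the resulting finite check (either by hand using the orbit structure or by a short computer verification analogous to~\cite{heuristic_cliquer}) supplies the desired $Y$ and thereby completes the proof.
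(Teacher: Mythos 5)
Your construction is essentially identical to the paper's proof: the coset code of cardinality $\Lambda=76$ (with $n'=4$, $k'=1$, $d'=2$, $\overline{F}=\{0\}$, and the type~A $(6,77,4;3)_2$ code packed into parts of sizes $4,4,4,5,\dots,5,7,7$), the lifted MRD code of size $4096$ with pivot vector $(1,1,1,1,0,\dots,0)$ certified compatible via Lemma~\ref{lemma_multilevel_coset_hamming}, and one further codeword found by a finite computer check, giving $4096+76+1=4173$. The only caveat --- shared with the paper, which likewise only reports that {\lq\lq}by a computer search we found a single codeword{\rq\rq} --- is that the existence of the extra codeword $Y$ is a computational fact that you assert rather than exhibit.
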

\begin{proof}
  Let $\mathcal{C}_2$ be the code from the coset construction as outlined above. There is exactly
  one pivot vector $v=(1,1,1,1,0,0,0,0,0,0)$ satisfying the condition from Lemma~\ref{lemma_multilevel_coset_hamming}.
  The corresponding code $\mathcal{C}_1$ is the MRD code of size $\left\lceil 2^{6(4-3+1)}\right\rceil=4096$,
  so that $\left|\mathcal{C}_1\cup\mathcal{C}_2\right|=4172$. By a computer search we found a single codeword that can be
  added to $\mathcal{C}_1\cup\mathcal{C}_2$.
\end{proof}

We remark that the code from Theorem~\ref{thm_improve_10_6_4} meets the MRD bound from Theorem~\ref{thm_MRD_upper_bound}.
\highlight{By an exhaustive search we have verified that the general Echelon-Ferrers construction yields only codes with
$\left|\mathcal{C}\right|\le 4167$.}

\section{Conclusion}
\label{sec_conclusion}

The arguably most successful \highlight{generally} applicable construction for both constant dimension and subspace codes
of large minimum subspace distance is the Echelon-Ferrers construction from \cite{etzion2009error}. Here, we
have introduced a generalization of \highlight{\cite[Construction III]{etzion2013codes}}, which we call
\emph{coset construction}.
\highlightgreen{It turned out that the new construction is provably superior to the Echelon-Ferrers
construction for some parameters, see Subsection~\ref{subsec_i_10_6_4}.
We were able to apply the
coset construction to an infinite family of constant dimension codes that
attain the MRD bound from \highlight{\cite[Theorem 11]{etzion2013codes}}.}
So far all improvements include the usage of a lifted MRD code of maximal shape, so that these approaches
are all limited by the MRD bound from Theorem~\ref{thm_MRD_upper_bound}. For the relatively small
addendums constructed by the coset construction, we may utilize subcodes that have a larger cardinality
than the corresponding value of the MRD bound, see Subsection~\ref{subsec_i_10_6_4}. The constructions
of subspace codes based on the coset construction typically should yield many non-isomorphic codes, since
there are already many non-isomorphic MRD codes, see e.g.\ \cite{berger2003isometries,morrison2014equivalence}.
In Section~\ref{sec_optimal_parameters} we have obtained some first insights on the optimal choice of parameters
for the coset construction and related optimization problems. However, we are rather \highlight{far away} from
a clear assessment of the capabilities of the coset construction. This can be seen for example \highlight{through} the following
facts. \highlight{While} the coset construction is principally applicable for \highlightgreen{\emph{general}} subspace codes, we so far have
not found a single example improving one of the currently known lower bounds, \highlight{significantly contrasting the
situation for constant dimension codes}.

A more systematic analysis of {\lq\lq}good{\rq\rq} choices of parameters is needed. To
this end we propose some strongly related open research questions.
\highlight{As a benchmark, it would be very valuable
to generalize the MRD bound of Theorem~\ref{thm_MRD_upper_bound} to a larger class of parameters. As it is an open
problem whether the MRD bound of Theorem~\ref{thm_MRD_upper_bound} can be attained in all cases, it seems promising
to look at the corresponding open cases with more effort. Going along the lines of Theorem~\ref{thm_MRD_upper_bound}, one may
study upper bounds on $(n,M,d;k)_q$ codes that contain $(n,M',d';k)_q$ subcodes where $d'>d$, since such
results would give upper bounds on the achievable parameters $\kappa$, see Subsection~\ref{subsec_decomposition_cdc}.
This might give another hint which constant dimension codes may be appropriate for $\mathcal{A}$ and $\mathcal{B}$ within the
coset construction.}

\highlight{
Our analysis of the {\lq\lq}optimality{\rq\rq} of the example from Subsection~\ref{subsec_i_10_6_4} heavily relies on the
classification of $(6,77,4;3)_2$ codes. Since it possibly was only a matter of coincidence that we did not need to look
at codes of smaller cardinalities we would like to classify all codes attaining cardinality $A_q(n,d;k)$ up to
\highlight{isomorphism} extendability results, see e.g.\ \cite{nakic2015extendability}, at least for moderate parameters.
In order to generalize this example for field sizes $q>2$, packings of $(6,q^6+2q^2+2q+1,4;3)_q$ codes into $q^3+q^2+q+1$ parts of
large cardinality have to be studied.
}

\highlight{
The construction of Theorem~\ref{thm_improve_series} can easily
by generalized to parameters $n=n'+2k-3$, where $k\ge 3$ and $n'\ge 3$. For $k'=1$, $d'=2$,
we can choose $\mathcal{A}=\G{n'}{1}{q}$ and $\mathcal{B}$ as a (maximal) partial $(k-2)$-spread
$\mathcal{P}$ in $\mathbb{F}_q^{n-n'}$. Then, a packing of $\mathcal{P}$ into $\gaussm{n'}{1}{q}$ parts
is needed. For the parameters of Theorem~\ref{thm_improve_series} this packing trivially exists.
We remark that the maximum size of partial $\tilde{k}$-spreads in $\mathbb{F}_q^{\tilde{n}}$ is
known for $\tilde{n}\equiv 0,1\pmod{\tilde{k}}$ for arbitrary $q$, see e.g.\ \cite{beutelspacher1975partial},
and for $\tilde{n}\equiv 2\pmod{\tilde{k}}$ and $q=2$, see \cite{spreadsk3,partial_spreads_sascha}. So it seems
useful to study packings of the known best constructions for partial spreads into $\gaussm{m}{1}{q}$
parts of large cardinality for different values of $m$.}

\section*{Acknowledgement}
The authors would like to thank the editor and the anonymous  referees for their remarks
significantly improving the presentation of our results.

\section*{Appendix: \highlight{Proof of Lemma 4}}
\begin{proof}
For $U,V\in\G{n}{k}{q}$ \highlightgreen{and $\tau$ from~(\ref{eqn:tau}),} we have
\[
  d_S(U,V) =2(\dim(U+V)-k)=2\left(\rk\left(\begin{array}{c}U\\V\end{array}\right)-k\right).
\]
\highlight{In the case when} $A=A'$ and $B=B'$ we conclude
\begin{eqnarray*}
&& d_S\left(
\tau^{-1}\left(\begin{pmatrix}A&\varphi_B(F)\\0&B\end{pmatrix}\right),
\tau^{-1}\left(\begin{pmatrix}A&\varphi_B(F')\\0&B\end{pmatrix}\right)
\right)\\
&=& 2\left(
\rk\begin{pmatrix}A&\varphi_B(F)\\0&B\\A&\varphi_B(F')\\0&B\end{pmatrix}
-k\right)\\
&=& 2\left(\rk\begin{pmatrix}A&0\\0&\varphi_B(F')-\varphi_B(F)\\0&B\end{pmatrix}-k\right)\\
&=& 2\left(\rk(A) + \rk\begin{pmatrix}\varphi_B(F')-\varphi_B(F)\\B\end{pmatrix} -k\right)
\end{eqnarray*}
Since the pivot columns of $B$ in $\varphi_B(F')-\varphi_B(F)$ consists solely of zeros, we have
\begin{eqnarray*}
&& 2\left(\rk(A) + \rk\begin{pmatrix}\varphi_B(F')-\varphi_B(F)\\B\end{pmatrix} -k\right)\\
&=& 2(\rk(A)+\rk(\varphi_B(F')-\varphi_B(F))+\rk(B)-k)\\
&=&2(k'+\rk(F'-F)+k-k'-k)\\
&=&2 \rk(F'-F)= 2 d_R(F,F').
\end{eqnarray*}

For $A \ne A'$ or $B \ne B'$ we similarly conclude
\begin{eqnarray}
  && d_S\left(\tau^{-1}\left(\begin{pmatrix}A&\varphi_{B}(F)\\0&B\end{pmatrix}\right),
     \tau^{-1}\left(\begin{pmatrix}A'&\varphi_{B'}(F')\\0&B'\end{pmatrix}\right)\right)\nonumber\\
  &=& 2\left(\rk\begin{pmatrix}A&\varphi_{B}(F)\\0&B\\A'&\varphi_{B'}(F')\\0&B'\end{pmatrix}-k\right)\label{ie_dist_coset}\\
  &\ge& 2\left( \rk\begin{pmatrix}A\\A'\end{pmatrix} +
        \rk\begin{pmatrix}B\\B'\end{pmatrix}-k\right)\nonumber,
\end{eqnarray}
using the fact that $\rk\begin{pmatrix}X&Y\\0&Z\end{pmatrix} \ge \rk(X) + \rk(Z)$ with equality if $Y$ is zero
and swapping rows or columns, respectively, does not change the rank. We continue with
\begin{eqnarray*}
  && 2\left( \rk\begin{pmatrix}A\\A'\end{pmatrix} +
        \rk\begin{pmatrix}B\\B'\end{pmatrix}-k\right),\\
  &=& 2\left(\frac{d_S(A,A')}{2}+k'+\frac{d_S(B,B')}{2}+k-k'-k\right)\\
  &=& d_S(A,A')+d_S(B,B').
\end{eqnarray*}
\end{proof}


\end{document}